\theoremstyle{definition}
\newtheorem{definition}{Definition}[section]
\newtheorem{remark}[definition]{Remark}
\newtheorem{question}[definition]{Question}
\newtheorem{example}[definition]{Example}
\theoremstyle{plain}    
\newtheorem{proposition}[definition]{Proposition}
\newtheorem{lemma}[definition]{Lemma}
\newtheorem{theorem}[definition]{Theorem}
\newtheorem{corollary}[definition]{Corollary}
\newcommand\Cpx{\mathbb{C}}
\newcommand\eps{\epsilon}
\newcommand\distr{\operatorname{distr}}
\newcommand\Nats{\mathbb{N}}
\newcommand\tr{{\mathrm{tr}}}
\newcommand\Tr{{\mathrm{Tr}}}
\newcommand\ev{{\mathrm{ev}}}
\def\today{{\number\day\space
\ifcase\month\or
January\or February\or March\or April\or May\or June\or
July\or August\or September\or October\or November\or December\fi
\space\number\year}}
\begin{document}

\title[Connes' embedding problem]{A Linearization of Connes' Embedding Problem}
\author[Collins]{Beno\^\i{}t Collins$^{\dagger}$}
\address{
Department of Mathematics and Statistics, University of Ottawa,
585 King Edward,
Ottawa, ON
K1N 6N5 Canada, and
CNRS, Department of Mathematics, Lyon 1 Claude Bernard University} 
\email{bcollins@uottawa.ca}
\thanks{\footnotesize $^{\dagger}$Research supported in part by NSERC
grant RGPIN/341303-2007}

\author[Dykema]{Ken Dykema$^{*}$}
\address{Department of Mathematics, Texas A\&M University,
College Station, TX 77843-3368, USA}
\email{kdykema@math.tamu.edu}
\thanks{\footnotesize $^{*}$Research supported in part by NSF grant DMS-0600814}

\date{\today}

\subjclass[2000]{46L10,15A42}

\begin{abstract}
We show that Connes' embedding problem for II$_1$--factors is equivalent to a statement
about distributions of sums of self--adjoint operators with matrix coefficients.
This is an application of a linearization result for finite von Neumann algebras,
which is proved using asymptotic second order freeness of Gaussian random matrices.
\end{abstract}

\maketitle

\section{Introduction}

A von Neuman algebra $\mathcal{M}$
is said to be {\em finite} if it possesses a normal, faithful,
tracial state $\tau$.
By ``finite von Neumann algebra'' $\mathcal{M}$, we will always mean such an algebra
equipped with a fixed such trace $\tau$.
{\em Connes' embedding problem} asks whether every such $\mathcal{M}$ with a separable predual
can be embedded in an ultrapower $R^\omega$ of the hyperfinite II$_1$--factor $R$
in a trace--preserving way.
This is well known to be equivalent to the question of whether a generating set $X$ for $\mathcal{M}$
has microstates, namely, whether there exist matrices over the complex numbers whose
mixed moments
up to an arbitrary given order approximate those of the elements of $X$
with respect to $\tau$, to within an 
arbitrary given tolerance.
(See section~\ref{sec:application-to-embeddability}
where precise definitions and, for completeness, a proof of this equivalence
are given.)
We will say that $\mathcal{M}$ posseses {\em Connes' embedding property} if it embeds in $R^\omega$.
(It is known that possession of this property does not depend on the choice of faithful trace $\tau$.)

Seen like this, Connes' embedding probem, which is open, is about a fundamental approximation property for
finite von Neumann algebras.
There are several important results, due to E.\ Kirchberg~\cite{K93},
F.\ R\u adulescu~\cite{R99}, \cite{R04}, \cite{R06}, \cite{R} and N.\ Brown~\cite{B04},
that have direct bearing on this problem;
see also G.\ Pisier's paper~\cite{P96} and N.\ Ozawa's survey~\cite{Oz04}.

Recently, H.\ Bercovici and W.S.\ Li~\cite{BL06} have proved a property
enjoyed by elements in a finite von Neumann algebra that embeds in $R^\omega$.
This property is related to a fundamental question about spectra of sums of operators:
given Hermitian matrices or, more generally, Hermitian operators $A$ and $B$ with specified spectra, what can
the spectrum of $A+B$ be?
For $N\times N$ matrices, a description was conjectured by Horn~\cite{Horn} and was eventually
proved to be true by work of Klyachko, Totaro, Knutson, Tao and others,
if by ``spectrum'' we mean the {\em eigenvalue sequence}, namely,
the list of eigenvalues repeated according to multiplicity and in non--increasing order.
In this description, the possible spectrum of $A+B$ is a convex subset of $\mathbb{R}^N$ described by certain inequalities,
called the {\em Horn inequalities}.
See Fulton's exposition \cite{F00} or, for a very abbreviated decription,
section~\ref{sec:QHB} of this paper.
We will call this convex set the {\em Horn body} associated to $A$ and $B$,
and denote it by $S_{\alpha,\beta}$, where $\alpha$ and $\beta$ are the eigenvalue sequences of $A$ and $B$, respectively.

Bercovici and Li \cite{BL01}, \cite{BL06} have studied
the analogous question for $A$ and $B$ self--adjoint
elements of a finite von Neumann algebra $\mathcal{M}$,
namely: if spectral data of $A$ and of $B$ are specified, what are the possible
spectral data of $A+B$?
Here, by ``spectral data'' one can take the distribution (i.e., trace of spectral measure)
of the operator in question, which is a compactly supported Borel
probability measure on $\mathbb{R}$, or, in a description that is equivalent,
the {\em eigenvalue function} of the operator, which is a nonincreasing, right--continuous function
on $[0,1)$ that is the non--discrete version of the eigenvalue sequence.

In~\cite{BL06}, for given eigenvalue functions $u$ and $v$,
they construct a
convex set, which we will call $F_{u,v}$, of eigenvalue functions.
This set can be viewed as a limit (in the appropriate sense) of Horn bodies as $N\to\infty$.
They show that
the eigenvalue function of $A+B$ must lie in $F_{u,v}$ whenever
$A$ and $B$ lie in $R^\omega$ and have eigenvalue functions $u$ and, respectively,
$v$.

Bercovici and Li's result provides a concrete method to attempt to show that a finite von Neumann algebra
$\mathcal{M}$ does not embed in $R^\omega$:
find self--adjoint $A$ and $B$ in $\mathcal{M}$ for which one knows enough about the spectral data
of $A$, $B$ and $A+B$, and find a Horn inequality (or, rather, it's appropriate modification
to the setting of eigenvalue functions) that is violated by these.

Their result also inspires two further questions:

\begin{question}
\renewcommand{\labelenumi}{(\roman{enumi})}
\begin{enumerate}
\item Which Horn inequalitites must be satisfied by the spectral data
of self--adjoints $A$, $B$ and $A+B$ in {\em arbitrary} finite von Neumann algebras?

\item (conversely to Bercovici and Li's result):
If we know,
for all self--adjoints $A$ and $B$
in an arbitrary finite von Neumann algebra $\mathcal{M}$,
calling their eigenvalue functions $u$ and $v$, respectively,
that the eigenvalue function of $A+B$ belongs to $F_{u,v}$,
is this equivalent to a positive answer for Connes' embedding problem?
\end{enumerate}
\end{question}

Question (ii) above is easily seen to be equivalent to the same question, but where $A$ and $B$ are assumed
to lie in some copies of the matrix algebra $\mathbb{M}_N(\mathbb{C})$ in $\mathcal{M}$, for some $N\in\mathbb{N}$.

Bercovici and Li, in~\cite{BL01}, partially answered the first question by
showing that a subset of the Horn inequalities (namely, the Freede--Thompson
inequalities) are always satisfied in arbitrary finite von Neuman algebras.

We attempted to address the second question.
We are not able to answer it, but we prove a related result (Theorem~\ref{thm:CEP})
which answers the analogous question for what we call the {\em quantum Horn bodies}.
These are the like the Horn bodies, but with matrix coefficients.
More precisely, if $\alpha$ and $\beta$ are nonincreasing real sequences of length $N$
and if $a_1$ and $a_2$ are self--adjoint $n\times n$ matrices for some $n$,
then the quantum Horn body $K_{\alpha,\beta}^{a_1,a_2}$ is the set of all possible
eigenvalue functions of matrices of the form
\begin{equation}
a_1\otimes U\mathrm{diag}(\alpha)U^*+a_2\otimes V\mathrm{diag}(\beta)V^*
\end{equation}
as $U$ and $V$ range over the $N\times N$ unitaries.
(In fact, Theorem~\ref{thm:CEP} concerns the appropriate union of such bodies over all $N$ ---
see section~\ref{sec:QHB} for details.)

\medskip

Our proof of Theorem~\ref{thm:CEP} is an application of a linearization result (Theorem~\ref{main-1})
in finite von Neumann algebras, which implies that if $X_1$, $X_2$, $Y_1$ and $Y_2$ are
self--adjoint elements of a finite von Neuman algebra and if the distributions (i.e., the moments)
of
\begin{equation}\label{eq:aX}
a_1\otimes X_1+a_2\otimes X_2
\end{equation}
and
\begin{equation}\label{eq:aY}
a_1\otimes Y_1+a_2\otimes Y_2
\end{equation}
agree for all $n\in\Nats$
and all self--adjoint $a_1,a_2\in\mathbb{M}_n(\mathbb{C})$, then the mixed moments of the pair $(X_1,X_2)$
agree with the mixed moments of the pair $(Y_1,Y_2)$, i.e.\ the trace of
\begin{equation}\label{eq:Xword}
X_{i_1}X_{i_2}\cdots X_{i_k}
\end{equation}
agrees with the trace of.
\begin{equation}\label{eq:Yword}
Y_{i_1}Y_{i_2}\cdots Y_{i_k}
\end{equation}
for all $k\in\mathbb{N}$ and all $i_1,\ldots,i_k\in\{1,2\}$.
This is equivalent to there being a trace--preserving isomorphism from the von Neumann algebra generated by $X_1$
and $X_2$ onto the von Neumann algebra generated by $Y_1$ and $Y_2$, that sends $X_i$ to $Y_i$.

This linearization result for von Neumann algebras is quite analogous to one for C$^*$--algebras
proved by U.\ Haagerup and S.\ Thorbj\o{}rnsen~\cite{HT05} (and quoted below as Theorem~\ref{HT-trick}).
However, our proof of Theorem~\ref{main-1} is quite different from that of Haagerup and Thorbj\o{}rnsen's result.
Our linearization result is not so surprising because, for example, for a proof it would suffice
to show that the trace of an arbitrary word of the form~\eqref{eq:Xword}
is a linear combination of moments of various elements of the form~\eqref{eq:aX}.
One could imagine that a combinatorial proof by explicit choice of some $a_1$ and $a_2$, etc., may be possible.
However, our proof does not yield an explicit choice.
Rather, it makes a random choice of $a_1$ and $a_2$.
For this we make crucial use of J.\ Mingo and R.\ Speicher's results on second order freeness of
independent GUE random matrices.

\smallskip

Finally, we need more than just the linearization result.
We use some ultrapower techniques to reverse quantifiers.
In particular, we show that for the von Neumann algebra generated by $X_1$ and $X_2$
to be embeddable in $R^\omega$, it suffices that for all self--adjoint matrices $a_1$ and $a_2$,
there exists $Y_1$ and $Y_2$ lying in $R^\omega$ such that the distributions of~\eqref{eq:aX} and~\eqref{eq:aY}
agree.
For this, it is for technical reasons necessary 
to strengten the linearization result (Theorem~\ref{main-1}) by restricting the matrices $a_1$ and $a_2$
to have spectra in a nontrivial bounded interval $[c,d]$.

\medskip

To recap:
in Section~\ref{sec:linearization} we prove the linearization result, making use of second order freeness.
In Section~\ref{sec:application-to-embeddability}, we review Connes' embedding problem and it's formulation in terms of
microstates;
then we make an ultrapower argument to prove a result (Theorem~\ref{corollaire}) characterizing embeddability
of a von Neumann algebra generated by self--adjoints $X_1$ and $X_2$ in terms of
distributions of elements of the form~\eqref{eq:aX}.
In Section~\ref{sec:QHB}, we describe the quantum Horn bodies, state some related questions and consider some examples.
We finish by rephrasing Connes' embedding problem in terms of the quantum Horn bodies.

\section{Linearization}\label{sec:linearization}

Notation:  we let $\mathbb{M}_n(\mathbb{C})$ denote the set of $n\times n$ complex matrices,
while $\mathbb{M}_n(\mathbb{C})_{s.a.}$
means the set of self--adjoint elements of $\mathbb{M}_n(\mathbb{C})$.
We denote by $\Tr:\mathbb{M}_n(\mathbb{C})\to\mathbb{C}$ the unnormalized trace,
and we let $\tr=\frac1n\Tr$ be the normalized trace (sending the identity element to $1$).

The main theorem of this section is

\begin{theorem}\label{main-1}
Let $\mathcal{M}$ be a von Neumann algebra generated by selfadjoint elements $X_1,\ldots ,X_k$ and
$\mathcal{N}$ be a von Neumann algebra generated by selfadjoint elements $Y_1,\ldots,Y_k$.
Let $\tau$ be a faithful trace on $\mathcal{M}$ and $\chi$ be a faithful trace on $\mathcal{N}$.

Let $c<d$ be real numbers and suppose that for all 
$n\in\mathbb{N}$ and all $a_1,\ldots,a_k$ in $\mathbb{M}_n(\mathbb{C})_{s.a.}$
whose spectra are contained in the interval $[c,d]$,
the distributions of $\sum_i a_i\otimes X_i$ and $\sum_i a_i\otimes Y_i$ are the same.

Then there exists an isomorphism
$\phi:\mathcal{M}\to\mathcal{N}$ such that $\phi (X_i)=Y_i$ and $\chi\circ \phi = \tau$.
\end{theorem}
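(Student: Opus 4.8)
The plan is to show that the mixed moments of $(X_1,\ldots,X_k)$ with respect to $\tau$ agree with the mixed moments of $(Y_1,\ldots,Y_k)$ with respect to $\chi$; once this is done, the desired trace-preserving isomorphism $\phi$ sending $X_i$ to $Y_i$ exists by the standard GNS-type argument. So the whole content is: the hypothesis on distributions of $\sum_i a_i\otimes X_i$ forces equality of all mixed traces $\tau(X_{i_1}\cdots X_{i_m})=\chi(Y_{i_1}\cdots Y_{i_m})$. Fix $m$ and the word $(i_1,\ldots,i_m)$; I want to exhibit this mixed trace as a quantity read off from the family of scalars $\{\tr((\sum_i a_i\otimes X_i)^p) : n\in\mathbb{N},\ a_i\in\mathbb{M}_n(\mathbb{C})_{s.a.},\ \operatorname{spec}(a_i)\subseteq[c,d],\ p\in\mathbb{N}\}$, in a way that depends only on these scalars and not otherwise on $X$ or $Y$. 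Since the hypothesis says this family of scalars is the same for $X$ and for $Y$, equality of the mixed trace follows.

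The mechanism for extracting the mixed trace is randomization. The idea is to take $a_1,\ldots,a_k$ to be independent $n\times n$ GUE matrices (suitably normalized and, to respect the spectral constraint $[c,d]$, cut off or rescaled — one uses that with high probability the spectrum of a normalized GUE matrix lies in a fixed interval, and passes to the limit so the cutoff is asymptotically irrelevant), and to compute the expectation of $\operatorname{Tr}\big((\sum_i a_i\otimes X_i)^p\big)$. Expanding the $p$-th power and using the matrix structure, $\mathbb{E}\,\operatorname{Tr}((\sum_i a_i\otimes X_i)^p)$ is a sum over words $(i_1,\ldots,i_p)$ of the quantity $\mathbb{E}[\tr(a_{i_1}\cdots a_{i_p})]\cdot\tau(X_{i_1}\cdots X_{i_p})$ (after tracing out appropriately), and the leading $n$-asymptotics of $\mathbb{E}[\tr(a_{i_1}\cdots a_{i_p})]$ are the free-probability mixed moments of free semicirculars — these are known, nonzero for the relevant pairings, and, crucially, independent of $X$. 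The first-order term thus recovers a linear combination of the $\tau(X_{i_1}\cdots X_{i_p})$. To disentangle the individual mixed moments of a fixed word from such linear combinations, one iterates: by choosing the $a_i$'s to be independent GUE's scaled by different parameters $t_i>0$, the coefficient $\mathbb{E}[\tr(a_{i_1}\cdots a_{i_p})]$ becomes a monomial in the $t_i$ times a universal constant, and varying the $t_i$ lets one separate contributions of different multi-degrees; within a fixed multi-degree one must still separate cyclically-inequivalent words, and here is where the finer information is needed — this is the role of Mingo–Speicher second-order freeness: the covariance (fluctuation) structure of $\operatorname{Tr}(a_{i_1}\cdots a_{i_p})$ distinguishes the remaining words, so that the full collection of first- and second-order data of $\tr$ and $\operatorname{Tr}$ of powers of $\sum_i a_i\otimes X_i$, over all $n$ and all admissible random $a_i$, determines every individual $\tau(X_{i_1}\cdots X_{i_p})$.

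The main obstacle is precisely the separation step: a single moment $\mathbb{E}\,\operatorname{Tr}((\sum a_i\otimes X_i)^p)$ only gives a weighted average of many mixed moments of $X$, and one must show that by ranging over all admissible $(n, a_1,\ldots,a_k, p)$ — and using both the mean (order $n$) and the fluctuation (order $1$) asymptotics — the resulting system of equations has enough rank to pin down each $\tau(X_{i_1}\cdots X_{i_p})$ separately. Organizing this into a clean argument requires (a) making the passage to the $n\to\infty$ limit rigorous while keeping the spectral constraint $\operatorname{spec}(a_i)\subseteq[c,d]$ — handled by a truncation/approximation argument exploiting that GUE spectra concentrate — and (b) an inductive bookkeeping on the word length $p$ and on cyclic-word classes, feeding second-order freeness in to break the degeneracies that first-order freeness alone leaves. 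Routine points (normalization constants, convergence of moments, the final GNS construction of $\phi$) I would defer.
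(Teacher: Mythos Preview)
Your proposal is essentially correct and identifies the same key ingredients as the paper: reduce to equality of mixed moments via GNS, randomize the $a_i$ as (shifted/scaled) GUE matrices, handle the spectral constraint $[c,d]$ via concentration of GUE spectra, and invoke Mingo--Speicher second-order freeness as the separating mechanism. The overall architecture matches.

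Where the paper differs is in the \emph{organization} of the separation step, and this difference is worth noting because it replaces your scaling-by-$t_i$ layer and makes your vague phrase ``the covariance structure distinguishes the remaining words'' precise. The paper recasts the problem as the kernel identity
\[
\bigcap_{a_1,\ldots,a_k}\ker\bigl(\Tr\circ\ev_{a_1,\ldots,a_k}\bigr)=\mathcal{I}
\]
(the commutator subspace), and then builds an explicit complement $\mathcal{J}$ of $\mathcal{I}$ spanned by products of \emph{Chebyshev polynomials} $\tilde U_{a_1}(x_1)\tilde U_{b_1}(x_2)\cdots$ rather than monomials. The point of this basis is that, by the orthogonality of the $U_i$ with respect to the semicircle law, the Mingo--Speicher covariance formula becomes \emph{diagonal}: the fluctuations $\Tr\circ\ev_{A_N,B_N}(y)-N\beta(y)$, for $y$ ranging over this basis, converge to \emph{independent} nontrivial Gaussians. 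Positive-definiteness of the limiting covariance form is then immediate, so any nonzero $y\in\mathcal{J}$ has $\Tr\circ\ev_{A_N,B_N}(y)\ne 0$ with probability bounded away from zero, yielding deterministic $a_i$ that witness this.

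In your plan the $t_i$-scaling step is in fact redundant: once one knows the random variable $\sum_w \Tr(a_w)c_w$ is identically zero, its limiting variance vanishes, and nondegeneracy of the second-order form alone forces $c=0$ --- no first-order/multi-degree stratification is needed. What your plan is missing is exactly the mechanism that \emph{proves} that nondegeneracy; the paper supplies it via the Chebyshev basis. If you carried out your covariance analysis in the monomial basis you would eventually have to prove the same positive-definiteness, and the cleanest route is precisely the change of basis the paper performs.
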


The statement of this theorem can be thought of as a version for finite von Neumann algebras of the 
following $C^*$--algebra linearization result of Haagerup and Thorbj\o{}rnsen.

\begin{theorem}[\cite{HT05}]\label{HT-trick}
Let $A$ (respectively $B$) be a unital $C^*$--algebra generated by selfadjoints $X_1,\ldots ,X_k$ 
(resp.\ $Y_1,\ldots ,Y_k$) such that for all positive integers $n$ and for all 
$a_0,\ldots ,a_k\in\mathbb{M}_n(\mathbb{C})_{sa}$,
\begin{equation}
a_0\otimes 1+a_1\otimes X_1+\ldots +a_{k}\otimes X_k
\end{equation}
and 
\begin{equation}
a_0\otimes 1+a_1\otimes Y_1+\ldots +a_{k}\otimes Y_k
\end{equation}
have the same spectrum, then
there exists an isomorphism $\phi$ from $A$ onto $B$ such that $\phi (X_i)=Y_i$.
\end{theorem}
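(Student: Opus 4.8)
The plan is to reduce the whole statement to one invertibility claim, namely:

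\medskip
\noindent\emph{Claim.} For every $Q$ in the free algebra $\mathbb C\langle t_1,\dots,t_k\rangle$ and every $\mu\in\mathbb C$, the element $Q(X_1,\dots,X_k)-\mu\cdot 1$ is invertible in $A$ if and only if $Q(Y_1,\dots,Y_k)-\mu\cdot 1$ is invertible in $B$.
\medskip

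\noindent Granting the Claim, $\sigma_A\bigl(Q(X)\bigr)=\sigma_B\bigl(Q(Y)\bigr)$ for every $Q$. Equip $\mathcal F:=\mathbb C\langle t_1,\dots,t_k\rangle$ with the involution fixing each $t_i$ and reversing words, and let $\Phi\colon\mathcal F\to A$ and $\Psi\colon\mathcal F\to B$ be the unital $*$-homomorphisms with $\Phi(t_i)=X_i$, $\Psi(t_i)=Y_i$. For $f\in\mathcal F$, the element $Q:=f^{*}f$ lies in $\mathcal F$, is self-adjoint, and satisfies $\Phi(Q)=\Phi(f)^{*}\Phi(f)$ because each $X_i$ is self-adjoint; since $\Phi(Q)$ is self-adjoint with spectrum $\sigma_B(\Psi(Q))$, we get $\|\Phi(f)\|^{2}=r\bigl(\Phi(f)^{*}\Phi(f)\bigr)=r\bigl(\Psi(f)^{*}\Psi(f)\bigr)=\|\Psi(f)\|^{2}$, where $r(\cdot)$ is the spectral radius. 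Thus $\|\Phi(f)\|=\|\Psi(f)\|$ for all $f$; as $\Phi(\mathcal F)$ and $\Psi(\mathcal F)$ are dense in $A$ and $B$, the map $\Phi(f)\mapsto\Psi(f)$ is a well-defined isometric unital $*$-isomorphism of dense $*$-subalgebras, extending to a $*$-isomorphism $\phi\colon A\to B$ with $\phi(X_i)=Y_i$.

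First I would strengthen the hypothesis from self-adjoint to arbitrary coefficient matrices, at the level of invertibility rather than full spectrum. Given $n\in\mathbb N$ and arbitrary $c_0,c_1,\dots,c_k\in\mathbb M_n(\mathbb C)$, set $M=c_0\otimes 1+\sum_{i=1}^k c_i\otimes X_i\in\mathbb M_n(\mathbb C)\otimes A$. Then $M$ is invertible if and only if $\left(\begin{smallmatrix}0&M\\ M^{*}&0\end{smallmatrix}\right)\in\mathbb M_{2n}(\mathbb C)\otimes A$ is invertible, and the latter equals $\left(\begin{smallmatrix}0&c_0\\ c_0^{*}&0\end{smallmatrix}\right)\otimes 1+\sum_{i=1}^k\left(\begin{smallmatrix}0&c_i\\ c_i^{*}&0\end{smallmatrix}\right)\otimes X_i$, whose coefficient matrices are all self-adjoint. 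Since the hypothesis forces this to have the same spectrum as the corresponding $Y$-element, in particular $0$ belongs to neither or both spectra; hence for every $n$ and all $c_0,\dots,c_k\in\mathbb M_n(\mathbb C)$, the pencil $c_0\otimes 1+\sum_i c_i\otimes X_i$ is invertible in $\mathbb M_n(\mathbb C)\otimes A$ iff $c_0\otimes 1+\sum_i c_i\otimes Y_i$ is invertible in $\mathbb M_n(\mathbb C)\otimes B$. Replacing $c_0$ by $c_0-\mu$ handles the shift for free.

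The main step, and the one I expect to be the real obstacle, is the linearization (``realization'') lemma: for every $Q\in\mathbb C\langle t_1,\dots,t_k\rangle$ there exist $m\in\mathbb N$ and matrices $b_0,b_1,\dots,b_k\in\mathbb M_m(\mathbb C)$ such that for every unital complex algebra $\mathcal A$ and every $k$-tuple $x_1,\dots,x_k\in\mathcal A$, the element $Q(x_1,\dots,x_k)$ is invertible in $\mathcal A$ if and only if $b_0\otimes 1+\sum_{i=1}^k b_i\otimes x_i$ is invertible in $\mathbb M_m(\mathcal A)$; moreover $b_1,\dots,b_k$ depend only on $Q$, so passing from $Q$ to $Q-\mu$ changes only $b_0$. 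I would prove this by induction on the complexity of $Q$, using the elementary Schur-complement fact that $\left(\begin{smallmatrix}A&B\\ C&D\end{smallmatrix}\right)$ with $D$ invertible is invertible iff $A-BD^{-1}C$ is: an indeterminate $t_i$ is already a linear pencil; a product $QR$ is realized from realizations of $Q$ and $R$ via a block such as $\left(\begin{smallmatrix}0&Q\\ -1&R\end{smallmatrix}\right)$, whose Schur complement is $QR$; and a sum $Q+R$ by a block-diagonal construction with an off-diagonal coupling term. The technical heart is the bookkeeping of block sizes and the point that every ``$D$''-block arising in the recursion must be invertible over \emph{any} algebra, which one ensures by keeping them block upper-triangular with invertible scalar diagonal. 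This is exactly the linearization trick of Haagerup and Thorbj\o{}rnsen (closely related to realization theory for noncommutative rational functions), so I would cite it and give only the inductive sketch above.

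Finally, assembling: fix $Q$ and $\mu$, and apply the lemma with $\mathcal A=A$, $x_i=X_i$ to get $b_0,\dots,b_k$ (with $b_0$ incorporating the shift $-\mu$) so that $Q(X)-\mu\cdot 1$ is invertible in $A$ iff $b_0\otimes 1+\sum_i b_i\otimes X_i$ is invertible in $\mathbb M_m(\mathbb C)\otimes A$. By spectral permanence, invertibility there coincides with invertibility in $\mathbb M_m$ of any $C^{*}$-algebra containing $A$, so the strengthened hypothesis applies and transfers it to invertibility of $b_0\otimes 1+\sum_i b_i\otimes Y_i$ in $\mathbb M_m(\mathbb C)\otimes B$, which by the lemma again is equivalent to invertibility of $Q(Y)-\mu\cdot 1$ in $B$. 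This establishes the Claim, and with it the theorem.
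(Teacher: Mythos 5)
The paper does not prove this theorem; it is quoted verbatim from \cite{HT05} as background, so there is no in-paper proof to compare against. Your argument is a correct reconstruction of the Haagerup--Thorbj\o{}rnsen strategy: the $2\times2$ self-adjointification reduces the hypothesis to invertibility of arbitrary linear pencils, the Schur-complement realization lemma transfers invertibility of $Q(X)-\mu$ to such pencils, and the resulting equality of spectral radii of the elements $\Phi(f^*f)$, $\Psi(f^*f)$ gives an isometric $*$-isomorphism on the dense polynomial subalgebras. The only caveat is that the entire weight rests on the realization lemma, which you sketch and then cite from \cite{HT05} itself --- that is legitimate (it is a separate, self-contained lemma there, valid over any unital algebra because the pivot blocks are unipotent), but a fully self-contained write-up would need to carry out the inductive bookkeeping you outline.
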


However, our proof of Theorem
\ref{main-1} is quite different from the proof of
Theorem \ref{HT-trick}.
In addition, there is the notable difference that
we do not need to consider matrix coefficients of the identity.
In order to simplify our notation, we restrict to proving the $k=2$ case of Theorem \ref{main-1}.
We indicate at Remark~\ref{rem:k} how our proof works in general.

\smallskip

Let $X^{\sharp}$ be the free monoid generated by free elements $x_1,x_2$, and
\begin{equation}
\mathbb{C}\langle x_1,x_2\rangle=\mathbb{C}[X^{\sharp}].
\end{equation}
be the free unital $*$--algebra over selfadjoint elements $x_1,x_2$.

Let $\rho$ be the rotation action of the integers on the set $X^{\sharp}$, given by
\begin{equation}
\rho(x_{i_1}\ldots x_{i_n})=x_{i_2}\ldots x_{i_n}x_{i_1}.
\end{equation}
Let $X^\sharp/\rho$ denote the set of orbits of this action.
Let $\mathcal{I}$ be the vector space spanned by the commutators
$[P,Q]$ with $P,Q\in \mathbb{C}\langle x_1,x_2\rangle$.
Recall that an (algebraic) trace is a linear map $\tau:\mathbb{C}\langle x_1,x_2\rangle\to \mathbb{C}$
such that $\tau (ab)=\tau(ba)$.
Equivalently, a linear map $\tau:\mathbb{C}\langle x_1,x_2\rangle\to \mathbb{C}$
is a trace if and only if it vanishes on $\mathcal{I}$.

\begin{lemma}\label{lem:V}
For any orbit $O\in X^\sharp/\rho$, let $V_O=\mathrm{span}\,O\subseteq\mathbb{C}\langle x_1,x_2\rangle$.
Then $\mathbb{C}\langle x_1,x_2\rangle$ splits as the direct sum
\begin{equation}\label{eq:sumVO}
\mathbb{C}\langle x_1,x_2\rangle=\bigoplus_{O\in X^\sharp/\rho}V_O.
\end{equation}
Moreover, the commutator subspace $\mathcal{I}$ splits accross this direct sum as
\begin{equation}\label{eq:IVO}
\mathcal{I}=\bigoplus_{O\in X^\sharp/\rho}V_O\cap\mathcal{I}.
\end{equation}
Furthermore, $V_O\cap\mathcal{I}$ is of codimension $1$ in $V_O$ and we have
\begin{equation}
V_O\cap\mathcal{I}=\{\sum_{x\in O}c_x x\mid c_x\in\mathbb{C},\,\sum_{x\in O}c_x=0\}.
\end{equation}
\end{lemma}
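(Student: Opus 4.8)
The plan is to work throughout with the monomial basis $X^\sharp$ of $\mathbb{C}\langle x_1,x_2\rangle$ and to exploit the single elementary fact that for any two words $w,w'$ the products $ww'$ and $w'w$ are cyclic rotations of one another, hence lie in the same $\rho$-orbit. Granting this, the decomposition \eqref{eq:sumVO} needs nothing more than the observation that $X^\sharp$ is a vector-space basis of $\mathbb{C}\langle x_1,x_2\rangle$ and that the orbits $O\in X^\sharp/\rho$ partition it, so $\mathbb{C}\langle x_1,x_2\rangle=\bigoplus_{O}\operatorname{span}O=\bigoplus_O V_O$. Note also that every orbit is finite (a word of length $n$ has orbit of size dividing $n$), so each $V_O$ is finite--dimensional.

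Next I would establish \eqref{eq:IVO}. Writing $P=\sum_w a_w w$ and $Q=\sum_{w'}b_{w'}w'$, bilinearity of the commutator gives $[P,Q]=\sum_{w,w'}a_w b_{w'}[w,w']$, so $\mathcal I$ is spanned by the commutators $[w,w']=ww'-w'w$ of \emph{monomials}; by the rotation remark each such commutator lies in $V_O$ for $O$ the orbit of $ww'$. Thus $\mathcal I$ is spanned by elements each contained in some $V_O$, whence $\mathcal I=\sum_O(\mathcal I\cap V_O)$, and this sum is automatically direct because $\bigoplus_O V_O$ is.

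For the final assertion set $W_O=\{\sum_{x\in O}c_x x : \sum_{x\in O}c_x=0\}$, the kernel in $V_O$ of the (nonzero, since $V_O\neq 0$) sum--of--coefficients functional, so a subspace of codimension $1$. The inclusion $W_O\subseteq\mathcal I\cap V_O$ reduces, after iterating $\rho$, to the identity $x-\rho(x)=[\,x_{i_1},\,x_{i_2}\cdots x_{i_n}\,]\in\mathcal I$ for $x=x_{i_1}x_{i_2}\cdots x_{i_n}$ (for the empty word and for one--letter words this difference is $0$); since the differences $x-x'$ of elements of a common orbit span $W_O$, this gives one inclusion. For the reverse inclusion I would exhibit a trace on $\mathbb{C}\langle x_1,x_2\rangle$ that detects $W_O$: the linear functional $T$ defined on the monomial basis by $T(w)=1$ for every word $w$ satisfies $T(ww')=1=T(w'w)$ for all monomials, hence is a trace and so vanishes on $\mathcal I$; as $T\big(\sum_{x\in O}c_x x\big)=\sum_{x\in O}c_x$, every $v\in\mathcal I\cap V_O$ lies in $W_O$. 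Together these give $\mathcal I\cap V_O=W_O$, and since $T$ does not vanish identically on $V_O$ the codimension is exactly $1$.

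I do not anticipate a genuine obstacle here: once the rotation remark is in place the argument is pure linear algebra. The only points requiring a little care are the passage from ``$\mathcal I$ is spanned by elements lying in various $V_O$'' to the \emph{internal} direct--sum decomposition \eqref{eq:IVO}, and checking consistency at the trivial orbit $O=\{1\}$, where $V_O=\mathbb{C}\cdot 1$, $W_O=0$, and $1\notin\mathcal I$ precisely because $T(1)=1$.
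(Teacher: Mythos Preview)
Your proof is correct and follows essentially the same route as the paper's: the rotation identity $x-\rho(x)=[x_{i_1},x_{i_2}\cdots x_{i_n}]$ gives one inclusion, and the sum--of--coefficients trace gives the other. Your argument is in fact more complete than the paper's, which merely writes the analogous relation and says ``one easily sees''; your explicit observation that $\mathcal I$ is spanned by commutators $[w,w']$ of monomials, each lying in a single $V_O$, and your explicit trace $T(w)=1$ make the two implications and the splitting~\eqref{eq:IVO} transparent.
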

\begin{proof}
The direct sum decomposition~\eqref{eq:sumVO} is obvious.
From the relation
\begin{equation}
x_{i_1}x_{i_2}\ldots x_{i_n}=
[x_{i_1}x_{i_2}\ldots x_{i_{n-1}},x_{i_n}]
 +x_{i_n}x_{i_1}x_{i_2}\ldots x_{i_{n-1}}\,,
\end{equation}
one easily sees
\begin{gather}
\mathcal{I}\subseteq\{\sum_{x\in O}c_x x\mid c_x\in\mathbb{C},\,\sum_{x\in X^\sharp}c_x=0\} \\
\{\sum_{x\in O}c_x x\mid c_x\in\mathbb{C},\,\sum_{x\in 0}c_x=0\}\subseteq V_O\cap\mathcal{I},
\end{gather}
from which the assertions follow.
\end{proof}

An orbit $O\in X^\sharp/\rho$
is a singleton if and only if it is of the form $\{x_i^a\}$ for some $i\in\{1,2\}$ and some integer $a\ge0$.
For each orbit that is not a singleton, choose a representative of the orbit of the form
\begin{equation}\label{eq:xia}
x=x_1^{a_1}x_2^{b_2}\cdots x_1^{a_n}x_2^{b_n}
\end{equation}
with $n\ge1$ and $a_1,\ldots,a_n,b_1,\ldots,b_n\ge1$,
and collect them together in a set $S$, of representatives for all the orbits in $X^\sharp/\rho$ that are not singletons. 

Let $\tilde U_i$ and $\tilde T_i$ ($i\in\mathbb{N}$) be two families of polynomials,
which we will specify
later on, such
that the degree of each $\tilde U_i$ and $\tilde T_i$ is $i$.
For $x\in S$ written as in~\eqref{eq:xia}, we let
\begin{equation}
\tilde U^x=\tilde U_{a_1}(x_1)\tilde U_{b_1}(x_2)\cdots\tilde U_{a_n}(x_1)\tilde U_{b_n}(x_2)
\in\mathbb{C}\langle x_1,x_2\rangle.
\end{equation}

\begin{lemma}\label{lem:Ponto}
The family
\begin{equation}\label{eq:Xi}
\Xi=\{1\}\cup\{\tilde T_a(x_i)\mid a\in\mathbb{N},\,i\in\{1,2\}\}
\cup\{\tilde U^x\mid x\in S\}\subseteq\mathbb{C}\langle x_1,x_2\rangle 
\end{equation}
is linearly independent
and spans a space $\mathcal{J}$ such that
\begin{align}
\mathcal{I}+\mathcal{J}&=\mathbb{C}\langle x_1,x_2\rangle \label{eq:IJ} \\
\mathcal{I}\cap\mathcal{J}&=\{0\}. \label{eq:IJ0}
\end{align}
\end{lemma}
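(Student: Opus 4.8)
The plan is to realize $\mathcal{J}$ as a linear complement of the commutator space $\mathcal{I}$ inside $\mathbb{C}\langle x_1,x_2\rangle$, by a leading--term (triangularity) argument with respect to the grading by word length. Lemma~\ref{lem:V} already supplies one such complement, spanned by monomials, and I would compare $\mathcal{J}$ to it.

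First I would note that $\mathcal{I}$ is a \emph{graded} subspace for the length grading, since it is spanned by the length--homogeneous elements $ww'-w'w$ with $w,w'\in X^\sharp$; equivalently, each $V_O$ is homogeneous, so the decomposition $\mathcal{I}=\bigoplus_O(V_O\cap\mathcal{I})$ of Lemma~\ref{lem:V} is graded. Choosing for each orbit $O$ the representative $w_O$ equal to $1$ when $O=\{1\}$, to $x_i^a$ when $O=\{x_i^a\}$, and to the distinguished element of $S$ otherwise, and using that $w_O$ has coefficient--sum $1$ while $V_O\cap\mathcal{I}$ is the coefficient--sum--zero hyperplane of $V_O$, I get $V_O=\mathbb{C}w_O\oplus(V_O\cap\mathcal{I})$ and hence a graded decomposition
\[
\mathbb{C}\langle x_1,x_2\rangle=\mathcal{I}\oplus\mathcal{J}_0,\qquad \mathcal{J}_0=\bigoplus_{O\in X^\sharp/\rho}\mathbb{C}\,w_O .
\]
There is an evident bijection between the basis $\{w_O\}$ of $\mathcal{J}_0$ and the set $\Xi$, matching $1\leftrightarrow1$, $x_i^a\leftrightarrow\tilde T_a(x_i)$, and $x\leftrightarrow\tilde U^x$ for $x\in S$; under it, the length of $w_O$ equals the degree of the matching element of $\Xi$.

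The one computation to carry out is that, for $\xi\in\Xi$ with matching monomial $w=w_O$, the degree--$|w|$ homogeneous part of $\xi$ is a nonzero scalar multiple of $w$, every other monomial of $\xi$ having strictly smaller degree. For $\xi=\tilde T_a(x_i)$ this is clear since $\tilde T_a$ has degree exactly $a$. For $\xi=\tilde U^x=\tilde U_{a_1}(x_1)\tilde U_{b_1}(x_2)\cdots\tilde U_{a_n}(x_1)\tilde U_{b_n}(x_2)$, any monomial of the expansion is a product of one monomial from each factor, hence has degree at most $\sum_j(a_j+b_j)=|x|$, with equality forcing the choice of the top monomial of each factor; that choice yields the single word $x=x_1^{a_1}x_2^{b_1}\cdots x_1^{a_n}x_2^{b_n}$, with coefficient the nonzero product of the leading coefficients of the $\tilde U_{a_j}$ and $\tilde U_{b_j}$. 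So $\xi=\lambda_\xi w_O+r_\xi$ with $\lambda_\xi\neq0$ and $\deg r_\xi<|w_O|$.

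Finally, letting $\pi\colon\mathbb{C}\langle x_1,x_2\rangle\to\mathcal{J}_0$ be the projection along $\mathcal{I}$, which preserves degree because both summands are graded, I get $\pi(\xi)=\lambda_\xi w_O+\pi(r_\xi)$ with $\pi(r_\xi)$ in the span of the $w_{O'}$ of length $<|w_O|$. By the standard triangularity fact, such a family $\{\pi(\xi):\xi\in\Xi\}$ is again a basis of $\mathcal{J}_0$; in particular its members are linearly independent, whence so are the $\xi$ (apply $\pi$ to a relation), which proves $\Xi$ linearly independent, and then $\pi|_{\mathcal{J}}\colon\mathcal{J}\to\mathcal{J}_0$ is a linear isomorphism. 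This gives $\mathcal{I}\cap\mathcal{J}=\ker(\pi|_{\mathcal{J}})=\{0\}$, i.e.\ \eqref{eq:IJ0}, and, since $\pi(\mathcal{J})=\mathcal{J}_0=\pi(\mathbb{C}\langle x_1,x_2\rangle)$ with $\ker\pi=\mathcal{I}$, also $\mathcal{I}+\mathcal{J}=\mathbb{C}\langle x_1,x_2\rangle$, i.e.\ \eqref{eq:IJ}. The hard part is not really hard: it is the bookkeeping that $\mathcal{I}$ is genuinely graded and that the $\Xi\leftrightarrow\{w_O\}$ correspondence preserves degree, so that the triangularity argument applies verbatim; everything else is routine linear algebra.
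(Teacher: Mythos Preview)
Your proof is correct and rests on the same leading--term (triangularity) idea as the paper's own argument: the top--degree homogeneous part of each $\xi\in\Xi$ is a nonzero scalar multiple of its matching orbit representative $w_O$, and lower--degree terms are absorbed by induction on degree. The only difference is packaging: the paper proves \eqref{eq:IJ} by induction on $\deg(O)$ and proves linear independence together with \eqref{eq:IJ0} by locating the top--degree piece of a putative nonzero element of $\mathcal{J}$ and checking it lies outside $V_O\cap\mathcal{I}$, while you introduce the monomial complement $\mathcal{J}_0$ and the degree--preserving projection $\pi$ to handle both at once.
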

\begin{proof}
For an orbit $O\in X^\sharp/\rho$, the total degree of all $x\in O$ agree;
denote this integer by $\mathrm{deg}(O)$.
Letting $V_O=\mathrm{span}\,O$ and using Lemma~\ref{lem:V}, an argument by induction
on $\deg(O)$ shows $V_O\subseteq\mathcal{I}+\mathcal{J}$.
This implies~\eqref{eq:IJ}.

To see the linear independece of~\eqref{eq:Xi} and to see~\eqref{eq:IJ0}, suppose
\begin{equation}\label{eq:y}
y=c_01+\sum_{n=1}^\infty(c^{(1)}_a\tilde T_a(x_1)+c^{(2)}_a\tilde T_a(x_2))+\sum_{x\in S}d_x\tilde U^x,
\end{equation}
for complex numbers $c_0$, $c^{(i)}_n$ and $d_x$, not all zero, and let us show $y\notin\mathcal{I}$.
We also write
\begin{equation}
y=\sum_{z\in X^\sharp}a_zz
\end{equation}
for complex numbers $a_z$.

Suppose $d_x\ne0$ for some $x$ and let $x\in S$ be of largest degree such that $d_x\ne0$.
Let $O\in X^\sharp/\rho$ be the orbit of $x$.
Then
\begin{equation}
\sum_{z\in O}a_zz=d_xx\notin V_O\cap\mathcal{I}.
\end{equation}
By the direct sum decomposition~\eqref{eq:IVO}, we get $y\notin\mathcal{I}$.

On the other hand, if $c^{(i)}_n\ne0$ for some $i\in\{1,2\}$ and some $n\ge1$.
Suppose $n$ is the largest such that $c^{(i)}_n\ne0$.
Then $a_{x_i^n}=c^{(i)}_n\ne0$, and $y\notin\mathcal{I}$.

Finally, if $d_x=0$ for all $x\in S$ and
if $c^{(i)}_n=00$ for some $i\in\{1,2\}$ and some $n\ge1$,
then we are left with $c_0\ne1$ and $y=c_01\notin\mathcal{I}$.
\end{proof}

We recall that a Gaussian unitary ensemble (also denoted by
{\em GUE}) is the probability distribution of the random matrix $Z_N+Z_N^*$ 
on $\mathbb{M}_N(\mathbb{C})$,
where $Z_N$ has independent complex 
gaussian entries of variance $1/2N$.
This distribution has a density proportional to  $e^{-N\Tr X^2}$ with respect to the Lebesgue measure
on the selfadjoint real matrices. 
A classical result of Wigner~\cite{W55} states that
the empirical eigenvalue distribution of a {\em GUE} converges as $N\to\infty$ in moments to
Wigner's semi--circle distribution
\begin{equation}
\frac{1}{2\pi}1_{[-2,2]}(x)\sqrt{4-x^2}dx.
\end{equation}

If we view the $X_N$ for various $N$ as matrix--valued random variables over a commone probability space,
then almost surely, the largest and smallest eigenvalues of $X_N$ converge as $N\to\infty$
to $\pm2$, respectively.
This was proved by Bai and Yin~\cite{BY88}
(see also~\cite{B99}).
See~\cite{HT03} for further discussion and an alternative proof.

We recall that the Chebyshev polynomials of the first kind
$T_i$ are the monic polynomials
orthogonal with respect to the weight $1_{(-2,2)}(x)(4-x^2)^{-1/2}dx$.
Alternatively, they are determined by their generating series
\begin{equation}
\sum_{i\geq 0} T_i(x)t^i=\frac{1-tx}{1-2tx+t^2}
\end{equation}

Similarly,  Chebyshev polynomial of the second kind $U_i$
are orthogonal with respect to the weight 
$1_{[-2,2]}(x)(4-x^2)^{1/2}dx$ and have the generating series
\begin{equation}
\sum_{i\geq 0} T_i(x)t^i=\frac{1}{1-2tx+t^2}
\end{equation}

The following result is random matrix folklore, but it is implied by more general
results of  Johansson (\cite{J98}, Cor 2.8):

\begin{proposition}\label{prop:johansson}
Let $X_N$ be the
{\em GUE} of dimension $N$ and $T_n$ the
Chebyshev polynomial of second kind.
Let
\begin{equation}\label{eq:alphan}
\alpha_n=\frac1{2\pi}\int_{-2}^2T_n(t)\sqrt{4-t^2}\,dt.
\end{equation}
Then for every $m\in\mathbb{N}$, the real random vector
\begin{equation}
2\bigg(\frac{\Tr (T_n(X_N))-N\alpha_n}{\sqrt{n}}\bigg)_{n=1}^m
\end{equation} 
tends in distribution as $N\to\infty$ toward a vector of independent standard real Gaussian variables.
\end{proposition}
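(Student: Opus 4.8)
The plan is to derive Proposition~\ref{prop:johansson} from the general central limit theorem for linear eigenvalue statistics of the GUE --- essentially Johansson's Corollary~2.8 in~\cite{J98} --- by way of two reductions: (i)~specializing the explicit limiting covariance form to the Chebyshev polynomials, where it becomes diagonal with variance proportional to the degree, so that the factors $2/\sqrt n$ appearing in the statement are exactly those that turn it into the identity; and (ii)~replacing the exact mean $\mathbb{E}\,\Tr T_n(X_N)$ by its leading term $N\alpha_n$, the error being $o(1)$.

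For step~(i), I would first recall the CLT in the needed form: for fixed polynomials $f_1,\dots,f_m$, the random vector $\bigl(\Tr f_j(X_N)-\mathbb{E}\,\Tr f_j(X_N)\bigr)_{j=1}^m$ converges in distribution as $N\to\infty$ to a centered real Gaussian vector $(\Gamma_1,\dots,\Gamma_m)$, with \emph{no} normalization in $N$ --- the variances staying bounded being the familiar manifestation of the rigidity of the GUE spectrum --- and the multivariate statement reduces to the scalar one by the Cram\'er--Wold device, since a linear combination of polynomials is again a polynomial. Moreover the limiting covariance is explicit: writing $x=2\cos\theta$ and expanding $f(2\cos\theta)=\sum_{k\ge0}\hat f_k\cos k\theta$, one has $\mathrm{Cov}(\Gamma_f,\Gamma_g)=\tfrac14\sum_{k\ge1}k\,\hat f_k\hat g_k$; the constant $\tfrac14$ is pinned down, for instance, by the identity $\mathrm{Var}(\Tr X_N)=1$. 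Since $T_n$ is a Chebyshev polynomial, $T_n(2\cos\theta)=\cos n\theta$, so $\hat{(T_n)}_k=\delta_{kn}$ and hence $\mathrm{Cov}(\Gamma_{T_n},\Gamma_{T_m})=\tfrac14\,n\,\delta_{nm}$, a diagonal matrix; being jointly Gaussian and uncorrelated, the $\Gamma_{T_n}$ are independent, and $2\Gamma_{T_n}/\sqrt n$ has variance $1$. This already gives the Proposition with the exact means $\mathbb{E}\,\Tr T_n(X_N)$ in place of $N\alpha_n$.

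For step~(ii), a standard moment computation for the GUE --- e.g.\ the genus (Harer--Zagier) expansion, which yields $\mathbb{E}\,\Tr(X_N^k)=N\!\int x^k\,d\mu_{sc}+O(1/N)$ for each fixed $k$, with $\mu_{sc}$ the semicircle law on $[-2,2]$ --- gives $\mathbb{E}\,\Tr T_n(X_N)-N\alpha_n=O(1/N)\to0$ because $\alpha_n=\int T_n\,d\mu_{sc}$; by Slutsky's lemma the limiting distribution is unchanged, and the Proposition follows. The one genuinely nontrivial ingredient is the CLT itself --- a self--contained proof would have to control all higher cumulants of $\Tr f(X_N)$ and evaluate the limiting variance --- but since we quote Johansson for it, the only remaining care is the bookkeeping of normalization constants needed so that the limiting covariance is \emph{exactly} the identity after rescaling, which is routine rather than deep.
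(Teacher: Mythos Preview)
Your proposal is correct and matches the paper's approach exactly: the paper gives no proof at all, merely recording that the result ``is random matrix folklore'' and ``is implied by more general results of Johansson (\cite{J98}, Cor.~2.8).'' Your sketch is precisely a careful unpacking of that implication --- diagonalizing Johansson's limiting covariance in the Chebyshev basis to obtain variance $n/4$ (whence the normalization $2/\sqrt n$), and using the $1/N$ mean expansion together with Slutsky to replace $\mathbb{E}\,\Tr T_n(X_N)$ by $N\alpha_n$ --- so you have supplied the details the paper leaves to the reader.
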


Consider two GUE random
matrix ensembles $(X_N)_{N\in\mathbb{N}}$ and 
$(Y_N)_{N\in\mathbb{N}}$, that are independent from each other (for each $N$).
Voiculescu proved~\cite{V91} that these converge in moments to free semicircular elements $s_1$ and $s_2$
having first moment zero and second moment $1$,
meaning that we have
\begin{equation}
\lim_{N\to\infty}E(\mathrm{tr}(X_N^{k_1}Y_N^{\ell_1}\cdots X_N^{k_m}Y_N^{\ell_m}))
=\tau(s_1^{k_1}s_2^{\ell_2}\cdots s_1^{k_m}s_2^{\ell_m})
\end{equation}
for all $m\ge1$ and $k_i,\ell_i\ge0$, (where $\tau$ is a trace with respect to which $s_1$ and $s_2$ are semicircular
and free).
Of course, by freeness, this implies that if $p_i$ and $q_i$ are polynomials
such that $\tau(p_i(s_1))=0=\tau(q_i(s_2))$
for all $i\in\{1,\ldots,m\}$, then
\begin{equation}
\lim_{N\to\infty}E(\mathrm{tr}(p_1(X_N)q_1(Y_N)\cdots p_m(X_N)q_m(Y_N)))=0.
\end{equation}
Mingo and Speicher~\cite{MS06}
have proved some remarkable results about the related fluctuations, namely, the (magnified) random variables~\eqref{eq:mixedfluc}
below.
These are asymptotically Gaussian and provide examples of the phenomenon of second order freeness, which has been
treated in a recent series of papers~\cite{MS06}, \cite{MSS07}, \cite{CMSS07}.
In particular, the following theorem is a straightforward consequence of some of the results in~\cite{MS06}.

\begin{theorem}\label{MS}
Let $X_N$ and $Y_N$ be independent GUE random matrix ensembles.
Let $s$ be a $(0,1)$--semicircular element with respect to a trace $\tau$.
Let $m\ge1$ and let $p_1,\ldots,p_m,q_1,\ldots,q_m$ be polynomials with real coefficients such that
$\tau(p_i(s))=\tau(q_i(s))=0$
for each $i$.
Then the random variable
\begin{equation}\label{eq:mixedfluc}
\mathrm{Tr}(p_1(X_N)q_1(Y_N)\cdots p_m(X_N)q_m(Y_N))
\end{equation}
converges in moments as $N\to\infty$ to a Gaussian random variable.
Moreover, if $\tilde m\ge1$ and if $\tilde p_1,\ldots,\tilde p_{\tilde m},\tilde q_1,\ldots,\tilde q_{\tilde m}$
are real polynomials such that 
$\tau(\tilde p_i(s))=\tau(\tilde q_i(s))=0$
for each $i$, then
\begin{align}
\lim_{N\to\infty}E\big(&\mathrm{Tr}(p_1(X_N)q_1(Y_N)\cdots p_m(X_N)q_m(Y_N))\cdot \\
&\quad\overline{\mathrm{Tr}(\tilde p_1(X_N)\tilde q_1(Y_N)\cdots \tilde p_{\tilde m}(X_N)\tilde q_{\tilde m}(Y_N))}\,\big) \\
&=\begin{cases}
\sum_{\ell=0}^{m-1}\prod_{j=1}^m\tau(p_j(s)\tilde p_{j+\ell}(s))\tau(q_j(s)\tilde q_{j+\ell}(s)),&m=\tilde m \\
0,&m\ne\tilde m,\end{cases}
\end{align}
where the subscripts of $p$ and $q$ are taken modulo $m$.
Furthermore, for any polynomial $r$, we have
\begin{align}
\lim_{N\to\infty}E\big(\mathrm{Tr}(p_1(X_N)q_1(Y_N)\cdots p_m(X_N)q_m(Y_N))\Tr(r(X_N))\big)&=0 \\
\lim_{N\to\infty}E\big(\mathrm{Tr}(p_1(X_N)q_1(Y_N)\cdots p_m(X_N)q_m(Y_N))\Tr(r(Y_N))\big)&=0.
\end{align}
\end{theorem}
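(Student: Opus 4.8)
The plan is to obtain Theorem~\ref{MS} as a repackaging of the second order freeness results of Mingo and Speicher. Recall that a family of random matrix ensembles is said to have a \emph{second order limit distribution} if, for all polynomials $P$, the means $E[\tr(P)]$ converge, for all $P,Q$ the covariances $\mathrm{Cov}(\Tr(P),\Tr(Q))$ converge (to a limit one writes $\varphi_2(P,Q)$), and all classical cumulants of order $\ge 3$ of the family $\{\Tr(P)\}$ tend to $0$; the last requirement says precisely that the centered variables $\Tr(P)-E[\Tr(P)]$ converge jointly in moments to a centered Gaussian family with covariance $\varphi_2$. First I would recall from \cite{MS06} that a single GUE ensemble has a second order limit distribution whose first order law is the $(0,1)$--semicircular law (the covariance $\varphi_2$ being, as Proposition~\ref{prop:johansson} reflects, diagonalized by the Chebyshev polynomials of the second kind), and that two independent GUE ensembles $(X_N)$ and $(Y_N)$ jointly have a second order limit distribution in which the subalgebras generated by $(X_N)$ and by $(Y_N)$ are asymptotically free of second order, with first order limit a pair $(s_1,s_2)$ of free $(0,1)$--semicircular elements. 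By the hypotheses on the $p_i,q_i$ we have $\tau(p_i(s_1))=\tau(q_i(s_2))=0$ for every $i$.

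Next I would set $w_N=p_1(X_N)q_1(Y_N)\cdots p_m(X_N)q_m(Y_N)$, and likewise $\tilde w_N=\tilde p_1(X_N)\tilde q_1(Y_N)\cdots\tilde p_{\tilde m}(X_N)\tilde q_{\tilde m}(Y_N)$, and note that each is an alternating product whose letters lie alternately in the two second order free subalgebras and are centered for the first order limit. Freeness of $s_1$ and $s_2$ gives $\varphi_1(w)=0$, i.e.\ $E[\tr(w_N)]\to0$; the genus expansion for mixed moments of independent complex GUE ensembles, in which only orientable surfaces contribute and so all corrections to $E[\tr(w_N)]$ are $O(N^{-2})$, upgrades this to $E[\Tr(w_N)]=N\,E[\tr(w_N)]=O(N^{-1})\to0$. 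Combined with the fact that the joint family has a second order limit distribution, this shows that $\Tr(w_N)$ itself converges in moments to a \emph{centered} Gaussian variable, and that $\lim_N E\big(\Tr(w_N)\overline{\Tr(\tilde w_N)}\big)$ equals the limiting covariance $\varphi_2(w,\tilde w)$; the same applies with $r(X_N)$ or $r(Y_N)$ in place of one of the words.

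It then remains to identify $\varphi_2$. The second order freeness calculus of \cite{MS06} evaluates $\varphi_2$ on a pair of alternating products of centered elements of second order free subalgebras: it vanishes when the two necklaces have different lengths, and when both have length $m$ it equals the cyclic sum $\sum_{\ell=0}^{m-1}\prod_{j=1}^m\varphi_1(p_j(s_1)\tilde p_{j+\ell}(s_1))\,\varphi_1(q_j(s_2)\tilde q_{j+\ell}(s_2))$ with subscripts read modulo $m$ --- which, since $\varphi_1$ restricts to $\tau$ on the relevant semicircular element, is exactly the asserted expression. The same calculus gives that the second order covariance between such an alternating mixed word and a word supported in a single one of the two subalgebras is $0$ (in the first order correlations against the $X$--algebra each $Y$--letter contributes only its mean, which is $0$), yielding the last two displayed limits.

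The step I expect to require the most care is the passage from the abstract conclusion that the joint ensemble has a second order limit distribution with its two subalgebras second order free, to the explicit index--bookkeeping form of the covariance claimed here, together with the verification that $E[\Tr(w_N)]\to0$ so that the stated second moment is genuinely the limiting covariance and the limiting Gaussian is centered. The right approach is to cite the precise statements of \cite{MS06} --- the second order limit distribution of a GUE, asymptotic second order freeness of independent GUE ensembles, and the formula for $\varphi_2$ on alternating centered products --- and to check that the hypotheses $\tau(p_i(s))=\tau(q_i(s))=0$ put us in the centered alternating case to which that formula applies, rather than to re-derive the fluctuation theory from scratch.
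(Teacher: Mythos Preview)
Your proposal is correct and follows exactly the route the paper takes: the paper gives no proof of Theorem~\ref{MS} at all, stating only that it ``is a straightforward consequence of some of the results in~\cite{MS06}.'' Your sketch supplies precisely those details---the existence of a second order limit distribution for independent GUE ensembles, their asymptotic second order freeness, the cyclic-sum formula for $\varphi_2$ on alternating centered words, and the observation (via the $1/N^2$ genus expansion) that $E[\Tr(w_N)]\to0$ so that the limiting Gaussian is centered and the stated mixed second moments coincide with the limiting covariances.
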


If $\mathfrak{A}$ is any unital algebra and if $a_1,a_2\in\mathfrak{A}$, we let 
\begin{equation}
\ev_{a_1,a_2}:\mathbb{C}\langle x_1,x_2\rangle\to\mathfrak{A}
\end{equation}
be the algebra homomorphism given by
\begin{equation}
\ev_{a_1,a_2}(P)=P(a_1,a_2).
\end{equation}
In the corollary below, which follows directly from
Theorem \ref{MS} and Proposition \ref{prop:johansson},
we take as $\mathfrak{A}$ the algebra of random matrices (over a fixed probability space)
whose entries have moments of all orders.

\begin{corollary}\label{cor:MS}
Let $u$ and $v$ be real numbers with $u<v$.
Let $A_{N},B_{N}$ be independent
copies of 
\begin{equation}
\frac{u+v}{2}Id+\frac{v-u}{2}X
\end{equation} 
where $X$ is distributed as the
{\em GUE} of dimension $N$.
Let 
\begin{equation}
\tilde T_i(x):=T_i(\frac2{v-u}x-\frac{u+v}{v-u}).
\end{equation}
and
\begin{equation}
\tilde U_i(x):=U_i(\frac2{v-u}x-\frac{u+v}{v-u}).
\end{equation}
If $y\in S$, then we have
\begin{equation}
\lim_{N\to\infty}E(\tr\,\circ\,\ev_{A_N,B_N}(y))=0,
\end{equation}
and we let $\beta(y)=0$.
If $y=x_i^n$ for $i\in\{1,2\}$ and $n\in\Nats$, then we have
\begin{equation}
\lim_{N\to\infty}E(\tr\,\circ\,\ev_{A_N,B_N}(y))=\alpha_n\,,
\end{equation}
where $\alpha_n$ is as in~\eqref{eq:alphan}, and we set $\beta(y)=\alpha_n$.

Then the random variables
\begin{equation}
\left(\; (\Tr\,\circ\,\ev_{A_N,B_N})(y)-N\beta(y)\;    \right)_{y\in \Xi\backslash\{1\}}\,,
\end{equation} 
where $\Xi$ is as in Lemma \ref{lem:Ponto},
converge in moments as $N\to\infty$
to independent, non--trivial, centered, Gaussian variables.
\end{corollary}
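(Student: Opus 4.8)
The plan is to transport everything to independent GUE ensembles and then feed it into Theorem~\ref{MS} and Proposition~\ref{prop:johansson}. Write $A_N=\frac{u+v}{2}\mathrm{Id}+\frac{v-u}{2}X_N$ and $B_N=\frac{u+v}{2}\mathrm{Id}+\frac{v-u}{2}Y_N$ with $X_N,Y_N$ independent GUE. The polynomials $\tilde T_i,\tilde U_i$ were chosen precisely so that $\tilde T_i(A_N)=T_i(X_N)$, $\tilde U_i(A_N)=U_i(X_N)$, and likewise with $B_N,Y_N$; hence $\ev_{A_N,B_N}(\tilde T_n(x_1))=T_n(X_N)$, $\ev_{A_N,B_N}(\tilde T_n(x_2))=T_n(Y_N)$, and for $x=x_1^{a_1}x_2^{b_1}\cdots x_1^{a_n}x_2^{b_n}\in S$ (all exponents $\ge1$) one has $\ev_{A_N,B_N}(\tilde U^x)=U_{a_1}(X_N)U_{b_1}(Y_N)\cdots U_{a_n}(X_N)U_{b_n}(Y_N)$. (I read the ``$y=x_i^n$'' case of the statement as $y=\tilde T_n(x_i)$ and the ``$y\in S$'' case as $y=\tilde U^x$, consistently with the description of $\Xi$ in Lemma~\ref{lem:Ponto}.) Thus the random vector to be analysed has components $\Tr(T_n(X_N))-N\alpha_n$, $\Tr(T_n(Y_N))-N\alpha_n$, and $\Tr(U_{a_1}(X_N)U_{b_1}(Y_N)\cdots U_{a_n}(X_N)U_{b_n}(Y_N))$.

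Next I would verify the centering constants. Since the $U_i$ are the second--kind Chebyshev polynomials, orthogonal with respect to the semicircle weight, $\tau(U_a(s))=0$ for every $a\ge1$, so Voiculescu's asymptotic freeness of independent GUEs gives $E(\tr\,\ev_{A_N,B_N}(\tilde U^x))\to0$, justifying $\beta(\tilde U^x)=0$; and Wigner's law gives $E(\tr\,T_n(X_N))\to\frac1{2\pi}\int_{-2}^2 T_n\sqrt{4-t^2}\,dt=\alpha_n$, justifying $\beta(\tilde T_n(x_i))=\alpha_n$. Moreover the genus expansion of a Gaussian matrix model runs in even powers of $1/N$ only, so $E(\Tr\,\ev_{A_N,B_N}(y))=N\beta(y)+O(N^{-1})$ for each $y$; hence each component $W^{(N)}_y:=\Tr(\ev_{A_N,B_N}(y))-N\beta(y)$ agrees up to an $o(1)$ deterministic quantity with the genuine fluctuation $\Tr(\ev_{A_N,B_N}(y))-E\,\Tr(\ev_{A_N,B_N}(y))$, and it suffices to treat the latter.

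The heart of the matter is joint convergence in moments of the whole family to a Gaussian vector. Proposition~\ref{prop:johansson}, applied to $X_N$ and to $Y_N$ separately, gives that the two $\tilde T$--blocks each converge to independent Gaussians indexed by $n$ with $\Tr(T_n(X_N))-N\alpha_n\to\mathcal{N}(0,n/4)$, and independence of $X_N$ and $Y_N$ makes the two blocks jointly independent. Theorem~\ref{MS} gives joint asymptotic Gaussianity of the $\tilde U$--variables, their limiting covariances, and the vanishing of their covariance with any $\Tr(r(X_N))$ or $\Tr(r(Y_N))$. The one thing not literally contained in the quoted statements is the joint Gaussianity of higher \emph{mixed} moments of $\tilde T$-- and $\tilde U$--variables together; for this one uses the full strength of Mingo--Speicher, namely that the pair $(X_N,Y_N)$ possesses a second--order limiting distribution, so that the fluctuations of \emph{all} words in $X_N,Y_N$ are jointly asymptotically centred Gaussian. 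I expect this mixed--moment point to be the main obstacle, since the two quoted results each cover only a ``pure'' subfamily together with the single pairwise cross--covariance between the two types.

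Finally one identifies the limiting covariance matrix and checks that it is diagonal with strictly positive diagonal, which forces the limit vector to have independent, non--trivial, centred components. The $\tilde T_n(x_i)$ entry is $n/4>0$. For $\tilde U^x$ with $n$ blocks, the covariance formula of Theorem~\ref{MS} (with $\tilde p=p$, $\tilde q=q$) gives the variance $\sum_{\ell=0}^{n-1}\prod_{j=1}^n\tau(U_{a_j}(s)U_{a_{j+\ell}}(s))\,\tau(U_{b_j}(s)U_{b_{j+\ell}}(s))$; by orthogonality of the $U_i$ every summand is a product of numbers $\tau(U_a(s)^2)\ge0$, and the $\ell=0$ summand is $\prod_j\tau(U_{a_j}(s)^2)\tau(U_{b_j}(s)^2)>0$. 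Off the diagonal: $\tilde T$ versus $\tilde T$ vanishes by Proposition~\ref{prop:johansson} (same variable, $n\ne m$) or by $X_N\perp Y_N$ (different variables); $\tilde T_n(x_i)$ versus $\tilde U^x$ vanishes because $\Tr(T_n(X_N))-N\alpha_n=\Tr((T_n-\alpha_n)(X_N))+o(1)$ with $T_n-\alpha_n$ semicircle--centred, so the ``furthermore'' part of Theorem~\ref{MS} applies; and $\tilde U^x$ versus $\tilde U^{x'}$ for distinct $x,x'\in S$ vanishes because, again by orthogonality of the $U_i$, a non--zero term in the covariance formula would force the block sequence of $x'$ to be a cyclic rotation of that of $x$, i.e.\ $x$ and $x'$ to lie in the same $\rho$--orbit, which is impossible. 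This yields the corollary.
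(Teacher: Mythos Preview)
Your proposal is correct and follows exactly the route the paper indicates: the paper simply states that the corollary ``follows directly from Theorem~\ref{MS} and Proposition~\ref{prop:johansson}'' and gives no further argument, and you have supplied the details of that deduction accurately, including the affine transport to GUE, the identification of the centering constants, the diagonality and positivity of the limiting covariance via the orthogonality of the $U_i$, and the observation that distinct elements of $S$ lie in different $\rho$--orbits so their cross--covariances vanish. Your parenthetical reading of ``$y\in S$'' and ``$y=x_i^n$'' as shorthand for $\tilde U^x$ and $\tilde T_n(x_i)$ is the only consistent one given that the final assertion is indexed by $\Xi\setminus\{1\}$, and your flagging of the joint Gaussianity of the mixed $\tilde T$/$\tilde U$ family as requiring the full second--order limiting distribution from Mingo--Speicher (rather than just the two quoted fragments) is a fair and accurate remark.
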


The following lemma is elementary and
we will only use it in the especially simple
case of $\delta=0$.
We will use it to see
that for a sequence $z_N$ of random variables converging in moments
to a nonzero random variable, we have that
$\mathrm{Prob}(z_N\ne0)$ is bounded away from zero as $N\to\infty$.
This is all unsurprising and well known, but we include proofs for completeness.

\begin{lemma}\label{lem:elem}
Let $y$ be a random variable with finite first and second moments, denoted
$m_1$ and $m_2$.
Suppose $y\ge0$ and $m_1>0$.
Then for every $\delta>0$ satisfying
\begin{equation}
0\le\delta<\min(\frac{m_2}{2m_1},m_1),
\end{equation}
there is $w$, a continuous function of $m_1$, $m_2$ and $\delta$, such that $0\le w<1$ and
\begin{equation}
\mathrm{Prob}(y\le\delta)\le w.
\end{equation}
More precisely, we may choose
\begin{equation}\label{eq:wform}
w=\begin{cases}\frac{-m_2+2\delta m_1+\sqrt{m_2^2-4\delta m_2(m_1-\delta)}}{2\delta^2},&\delta>0, \\
1-\frac{m_1^2}{m_2},&\delta=0.
\end{cases}
\end{equation}
\end{lemma}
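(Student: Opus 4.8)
The plan is to use a standard second--moment (Paley--Zygmund type) argument based on the Cauchy--Schwarz inequality. Write $p=\mathrm{Prob}(y\le\delta)$ throughout. First note that $m_2\ge m_1^2>0$ by Cauchy--Schwarz (or Jensen), and that $4\delta(m_1-\delta)=m_1^2-(m_1-2\delta)^2\le m_1^2\le m_2$, so that the quantity under the square root in~\eqref{eq:wform}, namely $m_2^2-4\delta m_2(m_1-\delta)$, is at least $m_2(m_2-m_1^2)\ge0$; hence $w$ is a well--defined real number. It is manifestly a continuous function of $(m_1,m_2,\delta)$ for $\delta>0$, and a short Taylor expansion of the $\delta>0$ branch as $\delta\to0^+$ shows it tends to $1-m_1^2/m_2$, so the two branches of~\eqref{eq:wform} fit together continuously.

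For the (easy) case $\delta=0$: since $y\ge0$ we have $\{y\le0\}=\{y=0\}$, and Cauchy--Schwarz gives $m_1=E(y)=E(y\,1_{\{y>0\}})\le\sqrt{E(y^2)}\,\sqrt{\mathrm{Prob}(y>0)}=\sqrt{m_2}\,\sqrt{1-p}$. Squaring yields $1-p\ge m_1^2/m_2$, i.e.\ $p\le1-m_1^2/m_2=w$, and $0\le w<1$ because $0<m_1^2\le m_2$.

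For $\delta>0$: split the first moment as $m_1=E(y\,1_{\{y\le\delta\}})+E(y\,1_{\{y>\delta\}})$. On $\{y\le\delta\}$ we have $0\le y\le\delta$, so the first term is at most $\delta p$; by Cauchy--Schwarz the second term is at most $\sqrt{E(y^2)}\,\sqrt{\mathrm{Prob}(y>\delta)}=\sqrt{m_2}\,\sqrt{1-p}$. Therefore $m_1-\delta p\le\sqrt{m_2}\,\sqrt{1-p}$. Since $p\le1$ and $\delta<m_1$, the left--hand side is at least $m_1-\delta>0$, so we may square, obtaining
\begin{equation*}
\delta^2p^2+(m_2-2\delta m_1)p+(m_1^2-m_2)\le0.
\end{equation*}
The discriminant of this quadratic in $p$ is precisely $m_2^2-4\delta m_2(m_1-\delta)$, which we showed is $\ge0$; hence $p$ lies between its two real roots, and in particular $p$ is at most the larger one. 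A direct computation identifies that larger root with the displayed expression for $w$ in~\eqref{eq:wform}, so $p\le w$ as claimed. To check $0\le w<1$ here, set $f(p)=\delta^2p^2+(m_2-2\delta m_1)p+(m_1^2-m_2)$; then $f(0)=m_1^2-m_2\le0$ and $f(1)=(m_1-\delta)^2>0$, so, as $f$ is an upward--opening parabola, $0$ lies in the closed interval between its roots while $1$ lies strictly to the right of it, giving $0\le w<1$.

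I do not expect a genuine obstacle: the only points requiring care are the strict positivity of $m_1-\delta p$ that licenses the squaring step (which uses $\delta<m_1$) and the nonnegativity of the discriminant (which uses $m_2\ge m_1^2$), and both are guaranteed by the hypotheses $0\le\delta<\min(m_2/2m_1,m_1)$ together with the elementary inequality $m_2\ge m_1^2$; everything else is bookkeeping with a quadratic.
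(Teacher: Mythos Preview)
Your proof is correct and follows essentially the same approach as the paper: split $m_1$ over $\{y\le\delta\}$ and its complement, apply Cauchy--Schwarz on the latter, square to obtain the quadratic inequality $\delta^2p^2+(m_2-2\delta m_1)p+m_1^2-m_2\le0$, and read off $w$ as the larger root, checking $0\le w<1$ via $f(0)\le0$ and $f(1)>0$. The only cosmetic differences are that the paper treats the case $\delta=0$ by specializing the same inequality rather than arguing separately, and bounds the root location via the vertex $x=(2\delta m_1-m_2)/(2\delta^2)<0$ (using $\delta<m_2/2m_1$) rather than by directly verifying nonnegativity of the discriminant as you do.
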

\begin{proof}
Say that $y$ is a random variable on a probability space $(\Omega,\mu)$ and let $V\subseteq\Omega$
be the set where $y$ takes values $\le\delta$.
Using the Cauchy--Schwarz inequality, we get
\begin{equation}
m_1\le\delta\mu(V)+\int_{V^c}y\,d\mu
\le\delta\mu(V)+m_2^{1/2}(1-\mu(V))^{1/2},
\end{equation}
which yields
\begin{equation}
\delta^2\mu(V)^2+(m_2-2\delta m_1)\mu(V)+m_1^2-m_2\le0.
\end{equation}
If $\delta=0$, then this gives
$\mu(V)\le 1-\frac{m_1^2}{m_2}=:w$.
When $\delta>0$,
consider the polynomial
\begin{equation}
p(x)=\delta^2x^2+(m_2-2\delta m_1)x+m_1^2-m_2.
\end{equation}
It's minimum value occurs at $x=\frac{2\delta m_1-m_2}{2\delta^2}<0$
and we have $p(0)=m_1^2-m_2\le0$ (by the Cauchy--Schwarz inequality) and $p(1)=(\delta-m_1)^2>0$.
Therefore, letting $r_2$ denote the larger of the roots of $p$, we have $0\le r_2<1$.
Moreover, if $x\ge0$ and $p(x)\le0$, then $x\le r_2$.
Taking $w=r_2$, we conclude that $\mu(V)\le w$, and we have the formula~\eqref{eq:wform}.
It is easy to see that $w$ is a continuous function of $m_1$, $m_2$ and $\delta$.
\end{proof}

\begin{lemma}\label{lem:stepII}
Let $c<d$ be real numbers.
For matrices $a_1$ and $a_2$, consider the maps $\Tr\,\circ\,\ev_{a_1,a_2}:\mathbb{C}\langle x_1,x_2\rangle\to\mathbb{C}$.
Then we have
\begin{equation}\label{eq:kers}
\bigcap_{\substack{
N\in\Nats \\
a_1,a_2\in \mathbb{M}_N(\mathbb{C})_{sa} \\
c1\le a_i\le d1,\,(i=1,2)}}
\ker(\Tr\,\circ\,\ev_{a_1,a_2})
=\mathcal{I}.
\end{equation}
\end{lemma}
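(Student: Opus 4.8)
The plan is to prove the two inclusions of \eqref{eq:kers} separately. One direction, $\mathcal{I}\subseteq\bigcap\ker(\Tr\,\circ\,\ev_{a_1,a_2})$, is immediate: each $\ev_{a_1,a_2}$ is an algebra homomorphism and $\Tr$ is a trace on $\mathbb{M}_N(\mathbb{C})$, so each $\Tr\,\circ\,\ev_{a_1,a_2}$ is an algebraic trace on $\mathbb{C}\langle x_1,x_2\rangle$ and hence vanishes on the commutator subspace $\mathcal{I}$. For the reverse inclusion it suffices to show: if $P\in\mathbb{C}\langle x_1,x_2\rangle\setminus\mathcal{I}$, then $(\Tr\,\circ\,\ev_{a_1,a_2})(P)\neq0$ for some $N\in\Nats$ and some $a_1,a_2\in\mathbb{M}_N(\mathbb{C})_{sa}$ with $c1\le a_i\le d1$. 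I would exhibit such $a_1,a_2$ by making a random choice and invoking Corollary~\ref{cor:MS}.

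First fix the data. Choose real numbers $u<v$ in $(c,d)$ with $v-u$ small enough that $[\tfrac{3u-v}{2},\tfrac{3v-u}{2}]\subseteq(c,d)$ (e.g.\ $u,v$ close to $\tfrac{c+d}{2}$), let $\tilde T_i$ and $\tilde U_i$ be the corresponding rescaled Chebyshev polynomials as in Corollary~\ref{cor:MS}, and let $\Xi$, $\mathcal{J}=\mathrm{span}\,\Xi$ be as in Lemma~\ref{lem:Ponto}; this is a legitimate choice since Lemma~\ref{lem:Ponto} only uses $\deg\tilde T_i=\deg\tilde U_i=i$. Let $(A_N)_N$ and $(B_N)_N$ be the independent rescaled GUE ensembles of Corollary~\ref{cor:MS} and let $\beta$ be the associated function on $\Xi\setminus\{1\}$. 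Since the extreme eigenvalues of a GUE tend almost surely to $\pm2$ (the Bai--Yin theorem recalled above), the spectra of $A_N$ and $B_N$ lie almost surely, for all large $N$, inside $(c,d)$; hence the event $E_N=\{\mathrm{spec}(A_N)\subseteq[c,d]\text{ and }\mathrm{spec}(B_N)\subseteq[c,d]\}$ satisfies $\mathrm{Prob}(E_N)\to1$ as $N\to\infty$.

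Now use Lemma~\ref{lem:Ponto} to write $P=P_{\mathcal{I}}+P_{\mathcal{J}}$ with $P_{\mathcal{I}}\in\mathcal{I}$, $P_{\mathcal{J}}\in\mathcal{J}$, and $P_{\mathcal{J}}\neq0$ because $P\notin\mathcal{I}$. Since $\Tr\,\circ\,\ev_{a_1,a_2}$ kills $\mathcal{I}$, it is enough to arrange $(\Tr\,\circ\,\ev_{a_1,a_2})(P_{\mathcal{J}})\neq0$. Write $P_{\mathcal{J}}=c_01+\sum_{y\in\Xi\setminus\{1\}}e_y\,y$ (a finite sum, with $c_0$ and the $e_y$ not all zero), put
\begin{equation*}
Z_N=(\Tr\,\circ\,\ev_{A_N,B_N})(P_{\mathcal{J}}),\qquad W_N=\sum_{y\in\Xi\setminus\{1\}}e_y\big((\Tr\,\circ\,\ev_{A_N,B_N})(y)-N\beta(y)\big),
\end{equation*}
and $\gamma=c_0+\sum_{y}e_y\beta(y)$, so that $Z_N=N\gamma+W_N$. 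By Corollary~\ref{cor:MS}, $W_N$ converges in moments to a centered Gaussian $W$; moreover, as the limiting family there is independent and non--trivial, $W$ is non--degenerate ($E|W|^2>0$) whenever some $e_y\neq0$, while if all $e_y=0$ then $P_{\mathcal{J}}=c_01$ with $c_0\neq0$, so $\gamma\neq0$. The core step is to show $\liminf_N\mathrm{Prob}(Z_N\neq0)>0$, by cases: if $\gamma\neq0$, then $W_N/N\to0$ in $L^2$ (as $E|W_N|^2\to E|W|^2<\infty$), so $Z_N/N\to\gamma\neq0$ in probability and $\mathrm{Prob}(Z_N\neq0)\to1$; if $\gamma=0$, then some $e_y\neq0$, so $W$ is non--degenerate and $Z_N=W_N$, and Lemma~\ref{lem:elem} with $\delta=0$ applied to the nonnegative variables $|W_N|^2$ (whose first two moments converge to $E|W|^2>0$ and $E|W|^4\in(0,\infty)$) gives $\mathrm{Prob}(W_N=0)\le 1-\tfrac{(E|W_N|^2)^2}{E|W_N|^4}$, which is bounded away from $1$ for large $N$. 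In either case, combining with $\mathrm{Prob}(E_N)\to1$ shows that for all large $N$ the event $E_N\cap\{Z_N\neq0\}$ has positive probability, hence is non--empty; any outcome in it yields $a_1,a_2\in\mathbb{M}_N(\mathbb{C})_{sa}$ with $c1\le a_i\le d1$ and $(\Tr\,\circ\,\ev_{a_1,a_2})(P)=(\Tr\,\circ\,\ev_{a_1,a_2})(P_{\mathcal{J}})\neq0$, as desired.

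The step I expect to be the main obstacle is this last, probabilistic one --- in particular the clean handling of the dichotomy according to whether the deterministic term $N\gamma$ survives, and the verification that the limiting Gaussian $W$ is non--degenerate exactly in the case ($\gamma=0$) where this is needed. The one place where the hypothesis that $[c,d]$ is a \emph{nontrivial} interval is used is the choice of $u<v$ together with the Bai--Yin input, which is what confines $\mathrm{spec}(A_N)$ and $\mathrm{spec}(B_N)$ to $[c,d]$; all of the analytic content --- asymptotic Gaussianity and non--triviality of the fluctuations --- is already packaged in Corollary~\ref{cor:MS}, which itself relies on the Mingo--Speicher second order freeness results (Theorem~\ref{MS}) and on Proposition~\ref{prop:johansson}.
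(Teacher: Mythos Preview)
Your proposal is correct and follows essentially the same approach as the paper's proof: reduce via Lemma~\ref{lem:Ponto} to a nonzero element of $\mathcal{J}$, feed in the rescaled independent GUE pair from Corollary~\ref{cor:MS}, split into cases according to whether the deterministic drift $\gamma$ (the paper's $\beta(y)$) vanishes, and in the $\gamma=0$ case invoke Lemma~\ref{lem:elem} on the second and fourth moments of the fluctuation, finally intersecting with the high--probability spectral containment event. Your organization differs only cosmetically---you fold the paper's separate ``$P_{\mathcal{J}}=c_01$'' case into the $\gamma\neq0$ branch, and you are slightly more careful than the paper in constraining $u,v$ so that the asymptotic support $[\tfrac{3u-v}{2},\tfrac{3v-u}{2}]$ of $A_N$ actually lies in $(c,d)$ (the paper just writes $c<u<v<d$ and cites~\cite{HT03} rather than Bai--Yin, but the content is the same).
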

\begin{proof}
The inclusion $\supseteq$ in~\eqref{eq:kers} follows from the trace property.

Let $c<u<v<d$ and
make the choice of polynomials $\tilde T_i$ and $\tilde U_i$ described in Corollary~\ref{cor:MS}.
Letting $\Xi$ and $\mathcal{J}$ be as in Lemm~\ref{lem:Ponto},
for each $y\in\mathcal{J}\backslash\{0\}$,
we will find matrices $a_1$ and $a_2$ such that
\begin{equation}\label{eq:Tr0}
\Tr(\ev_{a_1,a_2}(y))\ne0.
\end{equation}
By~\eqref{eq:IJ} and~\eqref{eq:IJ0} of Lemma~\ref{lem:Ponto}, this will suffice to show $\subseteq$ in~\eqref{eq:kers}.
Rather than find $a_1$ and $a_2$ explicitly,
we make use of random matrices.

We may write
\begin{equation}%\label{eq:y}
y=c_01+\sum_{n=1}^\infty(c^{(1)}_a\tilde T_a(x_1)+c^{(2)}_a\tilde T_a(x_2))+\sum_{x\in S}d_x\tilde U^x,
\end{equation}
with $c_0$, $c^{(i)}_n$ and $d_x$, not all zero.
If $c_0$ is the only nonzero coefficient,  then $y$ is a nonzero constant multiple of the identity and
any choice of $a_1$ and $a_2$ gives~\eqref{eq:Tr0}.
So assume some $c^{(i)}_n\ne0$ or $d_x\ne0$.
Let $A_N$ and $B_N$ be the independent $N\times N$ random matrices as described in Corollary~\ref{cor:MS}.
Extend the function $\beta:\Xi\backslash\{1\}\to\mathbb{R}$ that was defined in Corollary~\ref{cor:MS} to
a function $\beta:\mathcal{J}\to\mathbb{R}$ by linearity
and by setting $\beta(1)=1$.
By that corollary, the random variable
\begin{equation}
z_N:=\Tr\,\circ\,\ev_{A_N,B_N}(y)-N\beta(y)
\end{equation}
converges as $N\to\infty$ in moments to a Gausian random variable
with some nonzero variance $\sigma^2$.
It is now straightforward to see that
\begin{equation}\label{eq:Trev}
\mathrm{Prob}(\Tr\,\circ\,\ev_{A_N,B_N}(y)\ne0)
\end{equation}
is bounded away from zero as $N\to\infty$.
Indeed,
If $\beta(y)\ne0$, then since $N\beta(y)\to\pm\infty$ and since the second moment of $z_N$ stays bounded
as $N\to\infty$, the quantity~\eqref{eq:Trev} stays bounded away from zero as $N\to\infty$.
On the other hand, if $\beta(y)=0$, then
considering the second and fourth moments of $z_N$ and applying Lemma~\ref{lem:elem}, we find $w<1$ such that
for all $N$ sufficiently large, we have
$\mathrm{Prob}(z_N\ne0)\ge1-w$.
Thus, also in this case, the quantity~\eqref{eq:Trev} is bounded away from zero as $N\to\infty$.

{}By work of Haagerup and Thorbj\o{}rnsen (see equation~(3.7) and the next displayed equation
of~\cite{HT03}), we have
\begin{equation}\label{eq:HT}
\lim_{N\to\infty}\mathrm{Prob}(c1\le A_N\le d1)=1,
\end{equation}
and also for $B_N$.
Combining boundedness away from zero of~\eqref{eq:Trev} with~\eqref{eq:HT},
for some $N$ sufficiently large, we can evaluate $A_N$ and $B_N$ on a set
of nonzero measure to obtain $a_1,a_2\in\mathbb{M}_N(\mathbb{C})$ so that
$\Tr\,\circ\,\ev_{a_1,a_2}(y)\ne0$ and $c1\le a_i\le d1$ for $i=1,2$.
\end{proof}

\begin{proof}[Proof of Theorem \ref{main-1}]
As mentioned before, we concentrate on the case $k=2$,
and the other cases follow similarly.
By the Gelfand--Naimark--Segal
representation theorem, it is enough to prove that for all monomials $P$ in $k$ non--commuting variables,
we have
\begin{equation}
\tau (P(X_i))=\chi (P(Y_i)).
\end{equation}
Rephrased, this amounts to showing that we have
\begin{equation}\label{eq:tauchi}
\tau\circ\ev_{X_1,X_2}(x)=\chi\circ\ev_{Y_1,Y_2}(x)
\end{equation}
for all $x\in X^{\sharp}$.
By hypothesis,
for all $p\geq 0$, all $N\in\mathbb{N}$ and all $a_1,a_2\in\mathbb{M}_N(\mathbb{C})$ we have
\begin{equation}\label{tmp1}
\tr\otimes \tau ((a_1\otimes X_1+a_2\otimes X_2)^p)=\tr\otimes \chi ((a_1\otimes Y_1+a_2\otimes Y_2)^p).
\end{equation}
Developing the right--hand--side  minus the left--hand--side of~\eqref{tmp1}
gives that the equality
\begin{equation}
\sum_{i_1,\ldots i_p\in\{1,2\}} \tr (a_{i_1}\ldots a_{i_p})(\tau (X_{i_1}\ldots X_{i_p}) -\chi (Y_{i_1}\ldots Y_{i_p}))=0
\end{equation}
holds true for any choice $a_1,a_2\in\mathbb{M}_N(\mathbb{C})_{sa}$.
This equation can be rewritten as
\begin{equation}\label{eq:tauchi0}
\sum_{x\in S_p} 
c_x\big((\tr\,\circ\ev_{a_1,a_2})(x)\big)(\tau\circ\ev_{X_1,X_2}(x)-\chi\circ\ev_{Y_1,Y_2}(x))=0,
\end{equation}
where $S_p\subset X^\sharp$ is a set representatives, one from each orbit in $X^\sharp/\rho$,
of the monomials of degree $p$,
and where $c_x$ is the cardinality of each class.

Suppose, for contradiction, that~\eqref{eq:tauchi} fails for some $x\in S_p$.
Let
\begin{equation}
y=\sum_{x\in S_p} 
c_x(\tau\circ\ev_{X_1,X_2}(x)-\chi\circ\ev_{Y_1,Y_2}(x))x\in\mathbb{C}\langle x_1,x_2\rangle.
\end{equation}
By Lemma~\ref{lem:V},
$y\notin\mathcal{I}$.
By Lemma~\ref{lem:stepII}, there are $N\in\Nats$ and $a_1,a_2\in\mathbb{M}_N(\mathbb{C})$
such that $c1\le a_i\le d1$ for $i=1,2$ and $\tr\,\circ\ev_{a_1,a_2}(y)\ne0$.
But $\tr\,\circ\ev_{a_1,a_2}(y)$ is the left--hand--side of~\eqref{eq:tauchi0},
and we have a contradiction.
\end{proof}

\begin{remark}\label{rem:k}
We only proved the result for $k=2$. The proof for arbitrary $k$ is actually exactly the same.
The only difference is that the notations in the definition of second order freeness is more
cumbersome, but Theorem \ref{MS} as well as the other lemmas are unchanged.
\end{remark}

\begin{remark}\label{rem:ay}
The main ingredient in the proof of Theorem \ref{main-1} is to provide a method
of constructing $a_1,a_2\in \mathbb{M}_N(\mathbb{C})_{sa}$ such that
\begin{equation}\label{eq:Tey}
(\Tr\,\circ\,\ev_{a_1,a_2})(y)\ne0,
\end{equation}
whenever this is not ruled out by reasons of symmetry.
Our approach is probabilistic, and makes unexpected use of second--order freeness.
In particular, our approach is non--constructive.
It would be interesting to find a direct approach. 
\end{remark}

It is natural to wonder how much one can shrink the choice of matrices from which $a_1$ and $a_2$
in Remark~\ref{rem:ay} are drawn.
We would like to point out here that
in Lemma~\ref{lem:stepII}
we needs at least infinitely 
many values of $N$.
More precisely, we can prove the following:

\begin{proposition}
For each $N_0\in\mathbb{N}$, we have
\begin{equation}
\bigcap_{\substack{N\leq N_0 \\ a_1,a_2\in \mathbb{M}_N(\mathbb{C})_{sa}}} \ker (\Tr\,\circ\,\ev_{a_1,a_2})\supsetneqq\mathcal{I}.
\end{equation}
\end{proposition}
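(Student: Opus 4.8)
The plan is to show that the intersection of the kernels $\ker(\Tr\,\circ\,\ev_{a_1,a_2})$ taken over all $N\le N_0$ and all $a_1,a_2\in\mathbb{M}_N(\mathbb{C})_{sa}$ strictly contains $\mathcal{I}$ by exhibiting an explicit word $x\in X^\sharp$ with $x\notin\mathcal{I}$ that nonetheless is annihilated by every such map. The natural candidate is a word of large enough degree: if $x=x_{i_1}\cdots x_{i_p}$ has degree $p$ exceeding $2N_0$, or more generally if the total degree forces any $p$--fold product of matrices $a_{i_1}\cdots a_{i_p}$ from $\mathbb{M}_N(\mathbb{C})$ with $N\le N_0$ to have zero trace on a certain subspace, then such a $x$ (or a suitable linear combination of words in a single orbit) will do. The key observation is that traces of products of $N\times N$ matrices satisfy polynomial identities — e.g.\ the fundamental trace identities coming from the Cayley--Hamilton theorem — which become genuine \emph{linear} relations among the functionals $\Tr\,\circ\,\ev_{a_1,a_2}$ once $p$ is large relative to $N$.

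Concretely, I would proceed as follows. First, recall that for any fixed $N$, the algebra $\mathbb{M}_N(\mathbb{C})$ satisfies standard polynomial identities, and more to the point, the span of the functionals $A\mapsto\Tr(w(A))$, as $w$ ranges over words in a fixed number of matrix variables, is finite--dimensional in each fixed multidegree only up to the constraints imposed by these identities; by the Procesi--Razmyslov theory of trace identities, for $N\le N_0$ every trace of a word of degree $>N_0$ is a linear combination of traces of words of strictly smaller degree (with universal coefficients independent of the $a_i$). Second, I would use this to build, for $p$ sufficiently large depending on $N_0$, a nonzero element $y\in\mathbb{C}\langle x_1,x_2\rangle$ supported on words of degree $p$ — in fact one may take it supported on a single $\rho$--orbit, so that by Lemma~\ref{lem:V} it suffices to arrange $\sum_{x\in O}c_x\ne0$ to guarantee $y\notin\mathcal{I}$ — such that $\Tr\,\circ\,\ev_{a_1,a_2}(y)=0$ identically for all $N\le N_0$ and all $a_1,a_2\in\mathbb{M}_N(\mathbb{C})_{sa}$. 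Third, I would verify that the resulting $y$ lies in the left--hand intersection but not in $\mathcal{I}$, giving the strict inclusion.

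The main obstacle is making sure the trace identity one uses produces a combination that is \emph{not} already in $\mathcal{I}$: trace identities are themselves statements about traces, hence a priori only tell us about the image of $\mathbb{C}\langle x_1,x_2\rangle$ modulo $\mathcal{I}$, so naively the witnessing element might be forced into $\mathcal{I}$. The fix is to work with the \emph{unnormalized} trace $\Tr$ rather than $\tr$, and to exploit that a trace identity for $\mathbb{M}_N(\mathbb{C})$ valid for all $N\le N_0$ — such as the fully antisymmetrized product of $N_0+1$ generic matrices, which vanishes identically by the pigeonhole/Cayley--Hamilton argument — is a genuine polynomial relation in $\mathbb{C}\langle x_1,x_2\rangle$ whose homogeneous components need not lie in $\mathcal{I}$. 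One then substitutes $x_1,x_2$ for the generic matrices (collapsing the $N_0+1$ variables onto two) and extracts a homogeneous piece supported on a single orbit with nonzero coefficient sum. Alternatively, and perhaps more cleanly, one can argue by dimension count: the space of functionals on the degree--$p$ part of $\mathbb{C}\langle x_1,x_2\rangle$ spanned by $\{\Tr\,\circ\,\ev_{a_1,a_2}:N\le N_0\}$ has dimension bounded independently of $p$ (again by Procesi--Razmyslov), whereas the degree--$p$ part of $\mathbb{C}\langle x_1,x_2\rangle/\mathcal{I}$ has dimension equal to the number of orbits of degree $p$, which grows without bound; hence for $p$ large there is a nonzero functional on $\mathbb{C}\langle x_1,x_2\rangle_p/\mathcal{I}$ vanishing on all the $\Tr\,\circ\,\ev_{a_1,a_2}$, and dualizing inside the finite--dimensional space $\mathbb{C}\langle x_1,x_2\rangle_p$ yields the desired $y$. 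I would present the dimension--count version as the cleaner route, keeping the explicit antisymmetrizer construction as a remark.
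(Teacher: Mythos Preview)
Your dimension--count strategy is essentially the paper's own approach, but the specific bound you invoke is wrong, and Procesi--Razmyslov does not give what you claim. You assert that the span of the functionals $\Tr\circ\ev_{a_1,a_2}$ (restricted to degree--$p$ words, $N\le N_0$) has dimension bounded \emph{independently of $p$}. This is false already for $N=1$: the words of degree $p$ in $x_1,x_2$ evaluate to the $p+1$ monomials $a_1^k a_2^{p-k}$, which are linearly independent in $\mathbb{C}[a_1,a_2]$, so the span of functionals has dimension $p+1$. Procesi--Razmyslov says that traces of long words are \emph{polynomial} (ring) combinations of traces of short words, not linear combinations; and the fundamental trace identities involve products of traces, so they do not yield linear relations among the single--trace functionals $w\mapsto\Tr(w(a_1,a_2))$.

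What actually works --- and what the paper does --- is the weaker but sufficient observation that this span has dimension growing only \emph{polynomially} in $p$. After first reducing to a single $N$ (replace $N_0$ by $N_0!$ so that $\mathbb{M}_{N'}(\mathbb{C})\hookrightarrow\mathbb{M}_N(\mathbb{C})$ for all $N'\le N_0$), one forms generic matrices $X=(x_{ij})$, $Y=(y_{ij})$ with entries in the polynomial ring $\mathbb{C}[x_{ij},y_{ij}]$ in $2N^2$ variables, and notes that $\phi:=(\Tr\otimes\mathrm{id})\circ\ev_{X,Y}$ maps degree--$p$ words into homogeneous polynomials of degree $p$, a space of dimension $\binom{p+2N^2-1}{2N^2-1}=O(p^{2N^2-1})$. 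Since every $\Tr\circ\ev_{a_1,a_2}$ factors through $\phi$, the common kernel contains $\ker\phi$. On the other side, the degree--$p$ part of $\mathbb{C}\langle x_1,x_2\rangle/\mathcal{I}$ has dimension at least $2^p/p$ (the number of length--$p$ necklaces on two letters). For $p$ large enough, $2^p/p$ exceeds the polynomial bound, so $\phi$ restricted to degree $p$ has nontrivial kernel modulo $\mathcal{I}$, giving the desired $y$. Your antisymmetrizer idea is unnecessary, and your worry about trace identities landing in $\mathcal{I}$ is moot once the argument is run this way.
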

\begin{proof}
Without loss of generality (for example, by taking $N_0!$), it will be enough to prove
\begin{equation}\label{eq:kerTrev}
\bigcap_{a_1,a_2\in \mathbb{M}_N(\mathbb{C})_{sa}} \ker (\Tr\,\circ\,\ev_{a_1,a_2})\supsetneqq\mathcal{I}
\end{equation}
for each $N\in\mathbb{N}$.

Following the proof of Theorem~\ref{main-1},
let $W_p=\mathrm{span}\,\{x+\mathcal{I}\mid x\in S_p\}$ be the degree $p$ vector subspace of the quotient of vector spaces
$\mathbb{C}\langle x_1,x_2\rangle/\mathcal{I}$.
The dimension of $W_p$ is at least $2^p/p$.

Consider the commutative polynomial algebra $\mathbb{C}[x_{11},\ldots,x_{NN},y_{11},\ldots,y_{NN}]$
in the $2N^2$ variables $\{x_{ij},y_{ij}\mid1\le i,j\le N\}$.
Consider matrices
\begin{equation}
X=(x_{ij}),\;
Y=(y_{ij})\in\mathbb{M}_N(\mathbb{C})\otimes\mathbb{C}[x_{11},\ldots,x_{NN},y_{11},\ldots,y_{NN}]
\end{equation}
over this ring.
In this setting,
\begin{equation}
\phi:=(\Tr\otimes\mathrm{id}_{\mathbb{C}[x_{11},\ldots,x_{NN},y_{11},\ldots,y_{NN}]})\circ \ev_{X,Y}
\end{equation}
is a $\mathbb{C}$-linear map from 
$\mathbb{C}\langle x_1,x_2\rangle$ to $\mathbb{C}[x_{11},\ldots,x_{NN},y_{11},\ldots,y_{NN}]$
that vanishes on $\mathcal{I}$
and every map of the form $\Tr\,\circ\,\ev_{a_1,a_2}$ for $a_1,a_2\in\mathbb{M}_N(\mathbb{C})$
is $\phi$ composed with some evaluation map
on the polynomial ring $\mathbb{C}[x_{11},\ldots,x_{NN},y_{11},\ldots,y_{NN}]$.
Therefore, we have
\begin{equation}\label{eq:kerphi}
\ker\phi\subseteq\bigcap_{a_1,a_2\in \mathbb{M}_N(\mathbb{C})_{sa}} \ker (\Tr\,\circ\,\ev_{a_1,a_2}).
\end{equation}

We denote also by $\phi$ the induced map 
\begin{equation}
\mathbb{C}\langle x_1,x_2\rangle/\mathcal{I}
\to\mathbb{C}[x_{11},\ldots,x_{NN},y_{11},\ldots,y_{NN}].
\end{equation}
Clearly, $\phi$ maps $\mathbb{C}\langle x_1,x_2\rangle_p$ into the vector space of homogeneous polynomials
in $\mathbb{C}[x_{11},\ldots,x_{NN},y_{11},\ldots,y_{NN}]$ of
degree $p$.
The space of homogenous polynomials of degree $p$ in $M$ variables has dimension equal
to the binomial coefficient $\binom{p+M-1}{M-1}$.
Therefore, there exists a constant $C>0$, depending on $N$, such that 
$\phi$ maps into a subspace of complex dimension $\le Cp^{N^2-1}$.
For fixed $N$, there is $p$ large enough so that one has $2^p/p>C p^{N^2-1}$.
Therefore, by the rank theorem, the kernel of $\phi$ restricted to 
$\mathbb{C}\langle x_1,x_2\rangle_p$ must be non--empty.
Combined with~\eqref{eq:kerphi}, this proves~\eqref{eq:kerTrev}.
\end{proof}

\section{Application to embeddability}\label{sec:application-to-embeddability}

We begin by recalling the ultrapower construction.
Let $R$ denote the hyperfinite II$_1$--factor and $\tau_R$ its normalized trace.
Let $\omega$ be a free ultrafilter on $\Nats$ and let $I_\omega$ denote
the ideal of $\ell^\infty(\Nats,R)$ consisting of those sequences $(x_n)_{n=1}^\infty$
such that $\lim_{n\to\omega}\tau_R((x_n)^*x_n)=0$.
Then $R^\omega$ is the quotient $\ell^\infty(\Nats,R)/I_\omega$, which is actually a von Neumann
algebra.

Let $\mathcal{M}$ be a von Neumann algebra with
normal, faithful, tracial state $\tau$. 

\begin{definition}
The von Neumann algebra $\mathcal{M}$ 
is said to have {\em Connes' embedding property}
if $\mathcal{M}$ can be embedded into an ultra power $R^\omega$ of the hyperfinite von Neumann algebra $R$
in a trace--preserving way.
\end{definition}

\begin{definition}
If $X=(x_1,\ldots,x_n)$ is a finite subset of $\mathcal{M}_{sa}:=\{x\in\mathcal{M}\mid x^*=x\}$, we say that $X$ 
{\em has matricial microstates} if for every $m\in\Nats$ and every $\eps>0$,
there is $k\in\Nats$
and there are self--adjoint $k\times k$ matrices $A_1,\ldots,A_n$ such that whenever $1\le p\le m$
and $i_1,\ldots,i_p\in\{1,\ldots,n\}$, we have
\begin{equation}\label{eq:Amomxmom}
|\tr_k(A_{i_1}A_{i_2}\cdots A_{i_p})-\tau(x_{i_1}x_{i_2}\cdots x_{i_p})|<\eps,
\end{equation}
where $\tr_k$ is the normalized trace on $\mathbb{M}_k(\Cpx)$.
\end{definition}

It is not difficult to see that if $X$ has matricial microstates, then for every $m\in\Nats$
and $\eps>0$, there is $K\in\Nats$ such that for every $k\ge K$ there are matrices
$A_1,\ldots,A_n\in\mathbb{M}_k(\Cpx)$ whose mixed moments approximate those of $X$ in the sense
specified above.
Also, as proved by an argument of Voiculescu~\cite{VII},
if $X$ has matricial microstates, then 
each approximating
matrix $A_j$ above can be chosen to have norm
no greater than $\|x_j\|$.

The following result is well known.
For future reference, we briefly describe a proof.
\begin{proposition}\label{prop:microstates}
Let $\mathcal{M}$ be a von Neumann algebra with seperable predual
and $\tau$ a normal, faithful, tracial state on $\mathcal{M}$.
Then the following are equivalent:
\begin{itemize}
\item[(i)] $\mathcal{M}$ has Connes' embedding property.
\item[(ii)] Every finite subset $X\subseteq\mathcal{M}_{sa}$ has matricial microstates.
\item[(iii)] If $Y\subseteq M_{sa}$ is a generating set for $\mathcal{M}$, then
every finite subset $X$ of $Y$ has matricial microstates.
\end{itemize}
In particular, if $Y$ is a finite generating set of $\mathcal{M}$ then the above conditions are equivalent
to $Y$ having matricial microstates.
\end{proposition}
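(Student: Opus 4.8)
The plan is to prove the cycle (i) $\Rightarrow$ (ii) $\Rightarrow$ (iii) $\Rightarrow$ (i) and then dispatch the finite addendum. The implication (ii) $\Rightarrow$ (iii) is immediate since a finite subset of a generating set $Y$ is in particular a finite subset of $\mathcal{M}_{sa}$; and (iii) $\Rightarrow$ (ii) is equally cheap because $\mathcal{M}_{sa}$ is itself a generating set, so (ii) and (iii) coincide. Hence the real content is in (i) $\Rightarrow$ (ii) and (iii) $\Rightarrow$ (i), and separability of the predual enters only in the second implication, to fix one countable self-adjoint generating set $Y=\{y_1,y_2,\ldots\}$ of $\mathcal{M}$.

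For (i) $\Rightarrow$ (ii), fix a trace-preserving embedding $\iota\colon\mathcal{M}\hookrightarrow R^\omega$ and a finite set $X=\{x_1,\ldots,x_n\}\subseteq\mathcal{M}_{sa}$. Lift each $\iota(x_j)$ to a bounded sequence $(x_j^{(\ell)})_\ell$ in $\ell^\infty(\Nats,R)$, which may be taken self-adjoint with $\|x_j^{(\ell)}\|\le\|x_j\|$ (replace by real parts and truncate by continuous functional calculus, which is compatible with the quotient map onto $R^\omega$). For each word of length $p\le m$ one has $\tau(x_{i_1}\cdots x_{i_p})=\lim_{\ell\to\omega}\tau_R(x_{i_1}^{(\ell)}\cdots x_{i_p}^{(\ell)})$, so for some $\ell$ all these finitely many word moments agree with those of $X$ to within $\eps/2$. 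Now use hyperfiniteness of $R$: approximate $x_1^{(\ell)},\ldots,x_n^{(\ell)}$ simultaneously, self-adjointly and with norm still bounded by $\max_j\|x_j\|$ (truncate again after projecting onto a matrix subalgebra), in $\|\cdot\|_2$ by elements of a single finite-dimensional subfactor $\mathbb{M}_k(\mathbb{C})\subseteq R$; since word lengths and norms are uniformly bounded, a telescoping estimate shows the $\|\cdot\|_2$-errors propagate through products to an error $<\eps/2$ in each of the relevant word moments. The resulting self-adjoint $k\times k$ matrices are matricial microstates for $X$ of order $m$ and tolerance $\eps$.

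For (iii) $\Rightarrow$ (i), by (iii) each $\{y_1,\ldots,y_n\}$ has matricial microstates; applying this with order $n$ and tolerance $1/n$, and using Voiculescu's norm bound cited above, gives $k_n\in\Nats$ and self-adjoint $A_1^{(n)},\ldots,A_n^{(n)}\in\mathbb{M}_{k_n}(\mathbb{C})$ with $\|A_j^{(n)}\|\le\|y_j\|$ and $|\tr_{k_n}(A_{i_1}^{(n)}\cdots A_{i_p}^{(n)})-\tau(y_{i_1}\cdots y_{i_p})|<1/n$ for every word of length $p\le n$ in the letters $\{1,\ldots,n\}$. Embed each $\mathbb{M}_{k_n}(\mathbb{C})$ trace-preservingly into $(R,\tau_R)$ to get self-adjoint $\tilde A_j^{(n)}\in R$; assemble, for each $j$, the bounded sequence $a_j=(\tilde A_j^{(n)})_n$ (entries with $n<j$ set to $0$), of norm $\le\|y_j\|$, and let $Y_j\in R^\omega$ be its image. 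Then $\tau_{R^\omega}(Y_{i_1}\cdots Y_{i_p})=\lim_{n\to\omega}\tr_{k_n}(A_{i_1}^{(n)}\cdots A_{i_p}^{(n)})=\tau(y_{i_1}\cdots y_{i_p})$ for every word, since for all $n$ beyond the word length and the largest letter the approximant is within $1/n$. Consequently the $*$-algebra homomorphism $y_j\mapsto Y_j$, from the $*$-algebra generated by $Y$ in $\mathcal{M}$ into $R^\omega$, is well defined and $\|\cdot\|_2$-isometric: a polynomial $p(y_j)$ vanishes iff $\tau(p^*p)=0$ iff $\tau_{R^\omega}((p(Y))^*p(Y))=0$ iff $p(Y)=0$, using faithfulness of both traces. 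Implementing this map by the induced unitary between the $L^2$-completions extends it to a trace-preserving, hence normal and injective, homomorphism $\mathcal{M}\to R^\omega$, the desired embedding. Finally, if $Y=\{y_1,\ldots,y_n\}$ is a \emph{finite} generating set, then $Y$ having matricial microstates permits the same construction with a single-indexed sequence of microstates, yielding (i); conversely (i) $\Rightarrow$ (ii) gives back that $Y$ has matricial microstates since $Y\subseteq\mathcal{M}_{sa}$ is finite.

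I expect the main obstacle to be the bookkeeping in (iii) $\Rightarrow$ (i): arranging that the microstate matrices assemble into a genuine element of $\ell^\infty(\Nats,R)$ — which is exactly what Voiculescu's norm estimate secures — and then upgrading the $*$-algebra homomorphism to a normal embedding of the von Neumann algebras, which is handled by faithfulness of the traces together with the standard $L^2$-completion argument. The hyperfinite approximations in (i) $\Rightarrow$ (ii) are routine once one tracks how $\|\cdot\|_2$-errors propagate through products of norm-bounded elements.
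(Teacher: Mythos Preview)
Your proof is correct and follows essentially the same route as the paper's: (i)$\Rightarrow$(ii) by lifting to $\ell^\infty(\Nats,R)$, passing to a good index along $\omega$, and then approximating inside a matrix subalgebra of $R$; (iii)$\Rightarrow$(i) by enumerating a countable generating set, taking microstates for initial segments with shrinking tolerance and Voiculescu's norm bound, embedding the matrix algebras into $R$, and reading off elements of $R^\omega$ with the correct joint moments. You supply a bit more detail than the paper does (the telescoping control of $\|\cdot\|_2$-errors through bounded products, and the $L^2$-completion argument upgrading the $*$-algebra map to a normal embedding), but the architecture is the same.
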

\begin{proof}
The implication (i)$\implies$(ii) follows because if
$X=(x_1,\ldots,x_n)\subseteq(R^\omega)_{sa}$, then
choosing any representatives of the $x_j$ in $\ell^\infty(\Nats,R)$, we find elements
$a_1,\ldots,a_n$ of $R$ whose mixed moments up to order $m$ approximate those of the $x_j$
as closely as desired.
Now we use that any finite subset of $R$ is approximately (in $\|\,\|_2$--norm)
contained in some copy $\mathbb{M}_k(\Cpx)\subseteq R$, for some $k$ sufficiently large.

The implication (ii)$\implies$(iii) is evident.

For (iii)$\implies$(i), we may without loss of generality suppose that $Y=\{x_1,x_2,\ldots\}$
for some sequence $(x_j)_1^\infty$ possibly with repetitions.
Fix $m\in\Nats$, let $k\in\Nats$ and let $A_1^{(m)},\ldots,A_m^{(m)}\in\mathbb{M}_k(\Cpx)$ be
matricial microstates for $x_1,\ldots,x_m$ so that~\eqref{eq:Amomxmom}
holds for all $p\le m$ and for $\eps=1/m$, and assume $\|A_i^{(m)}\|\le\|x_i\|$ for all $i$.
Choose a unital $*$--homomorphism $\pi_k:\mathbb{M}_k(\Cpx)\hookrightarrow R$,
and let $a_i^m=\pi_k(A_i^{(m)})$.
Let $b_i=(a_i^m)_{m=1}^\infty\in\ell^\infty(\Nats,R)$,
where we set $a_i^m=0$ if $i>m$.
Let $z_i$ be the image of $b_i$ in $R^\omega$.
Then $z_1,z_2,\ldots$ has the same joint distribution as $x_1,x_2,\ldots$, and
this yields an embedding $M\hookrightarrow R^\omega$ sending $x_i$ to $z_i$.
\end{proof}

A direct consequence of Theorem~\ref{main-1} is:

\begin{theorem}\label{corollaire}
Suppose that a von Neumann algebra $\mathcal{M}$ with trace $\tau$ is generated by self--adjoint elements
$x_1$ and $x_2$.
Let $c<d$ be real numbers.
Then $\mathcal{M}$ has Connes' embedding property if and only if
there exists $y_1,y_2\in (R^{\omega})_{sa}$
such that for all  $a_1,a_2\in\mathbb{M}_n(\mathbb{C})_{sa}$ whose spectra are contained in $[c,d]$,
\begin{equation} \label{eq:aXaY}
\distr(a_1\otimes x_1+a_2\otimes x_2)=\distr(a_1\otimes y_1+a_2\otimes y_2).
\end{equation}
\end{theorem}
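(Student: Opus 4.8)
The plan is to obtain both implications directly: the forward one from functoriality of the tensor construction, and the reverse one as an immediate corollary of Theorem~\ref{main-1}.

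For the ``only if'' direction, assume $\mathcal{M}$ has Connes' embedding property and fix a trace--preserving embedding $\iota\colon\mathcal{M}\hookrightarrow R^\omega$; set $y_i=\iota(x_i)\in(R^\omega)_{sa}$. For any $n$ and any $a_1,a_2\in\mathbb{M}_n(\mathbb{C})_{sa}$ (the spectral restriction plays no role here), $\mathrm{id}_{\mathbb{M}_n(\mathbb{C})}\otimes\iota$ is a trace--preserving $*$--isomorphism of $\mathbb{M}_n(\mathbb{C})\otimes\mathcal{M}$ onto its image in $\mathbb{M}_n(\mathbb{C})\otimes R^\omega$ carrying $a_1\otimes x_1+a_2\otimes x_2$ to $a_1\otimes y_1+a_2\otimes y_2$; since such an isomorphism preserves the distribution of any self--adjoint element, \eqref{eq:aXaY} holds.

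For the ``if'' direction, let $y_1,y_2\in(R^\omega)_{sa}$ be as in the statement and put $\mathcal{N}=W^*(y_1,y_2)\subseteq R^\omega$, with $\chi$ the restriction to $\mathcal{N}$ of the canonical trace of $R^\omega$. A restriction of a normal faithful tracial state is again one, so $\mathcal{N}$ is a finite von Neumann algebra generated by the self--adjoints $y_1,y_2$. The hypothesis~\eqref{eq:aXaY}, holding for all $n$ and all $a_1,a_2\in\mathbb{M}_n(\mathbb{C})_{sa}$ with spectra in $[c,d]$, is precisely the hypothesis of Theorem~\ref{main-1} for the pair $(\mathcal{M},\tau)$, $(\mathcal{N},\chi)$. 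Thus there is an isomorphism $\phi\colon\mathcal{M}\to\mathcal{N}$ with $\phi(x_i)=y_i$ and $\chi\circ\phi=\tau$; composing with the inclusion $\mathcal{N}\hookrightarrow R^\omega$ exhibits $\mathcal{M}$ as a trace--preservingly embedded subalgebra of $R^\omega$, i.e.\ $\mathcal{M}$ has Connes' embedding property.

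I do not anticipate a serious obstacle: all the substance is already packaged in Theorem~\ref{main-1} (ultimately in Lemma~\ref{lem:stepII}), whose point is that matching the distributions of $a_1\otimes x_1+a_2\otimes x_2$ and $a_1\otimes y_1+a_2\otimes y_2$ for matrix coefficients with spectra in $[c,d]$ already forces equality of all mixed moments of the two pairs. The only things to be careful about are that the trace $\chi$ on the subalgebra $\mathcal{N}$ is faithful (so that Theorem~\ref{main-1} genuinely applies) and that a trace--preserving copy inside $R^\omega$ is still a witness for Connes' embedding property. One could alternatively run the ``if'' direction through microstates, noting that \eqref{eq:aXaY} forces the joint distribution of $(x_1,x_2)$ to equal that of $(y_1,y_2)\subseteq(R^\omega)_{sa}$, invoking matricial microstates for the latter pair, and then applying Proposition~\ref{prop:microstates} to the finite generating set $\{x_1,x_2\}$; this is the formulation in which the ultrapower argument behind Proposition~\ref{prop:microstates} enters.
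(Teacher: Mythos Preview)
Your proposal is correct and matches the paper's own treatment: the paper states Theorem~\ref{corollaire} as ``a direct consequence of Theorem~\ref{main-1}'' without further proof, and you have supplied precisely the expected details --- functoriality of $\mathrm{id}\otimes\iota$ for the forward direction, and an application of Theorem~\ref{main-1} (with $\mathcal{N}=W^*(y_1,y_2)$ and $\chi$ the restricted trace) for the converse.
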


In this section we will prove that Connes' embedding property
is equivalent to a weaker condition.

\begin{lemma}\label{lem:YI}
Suppose that a von Neumann algebra $\mathcal{M}$ with trace $\tau$ is generated by self--adjoint elements
$x_1$ and $x_2$.
Let $c<d$ be real numbers and
for every $n\in\Nats$, let $E_n$ be a dense subset of the set
of all elements of $\mathbb{M}_n(\Cpx)$ whose spectra are contained in the interval $[c,d]$.
Then $\mathcal{M}$ has Connes' embedding property if and only if
for all 
finite sets $I$ and all choices of $n(i)\in I$ and $a_1^i,a_2^i\in E_{n(i)}$,
$(i\in I)$,
there exists $y_1,y_2\in R^{\omega}_{s.a.}$ such that
\begin{gather}
\distr(x_1)=\distr(y_1) \label{eq:x1y1} \\
\distr(x_2)=\distr(y_2) \\
\distr(a_1^i\otimes x_1+a_2^i\otimes x_2)=\distr(a_1^i\otimes y_1+a_2^i\otimes y_2),\quad(i\in I).
\label{eq:aiyaix}
\end{gather}
\end{lemma}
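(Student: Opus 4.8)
The plan is to prove the two implications separately; the forward one is immediate from Theorem~\ref{corollaire}, while the reverse one is the ``quantifier reversal'' alluded to in the introduction and is where the work lies. For the forward direction, suppose $\mathcal{M}$ has Connes' embedding property. By Theorem~\ref{corollaire} there are $y_1,y_2\in(R^\omega)_{sa}$ with $\distr(a_1\otimes x_1+a_2\otimes x_2)=\distr(a_1\otimes y_1+a_2\otimes y_2)$ for every $n$ and all $a_1,a_2\in\mathbb{M}_n(\mathbb{C})_{sa}$ with spectra in $[c,d]$. Specializing to $a_1=\lambda1$, $a_2=\mu1\in\mathbb{M}_1(\mathbb{C})$ with $\lambda,\mu\in[c,d]$ and $\lambda\ne0$ gives $\distr(\lambda x_1+\mu x_2)=\distr(\lambda y_1+\mu y_2)$; reading off the coefficient of $\lambda^p$ in the resulting polynomial identity in $(\lambda,\mu)$ yields $\distr(x_i)=\distr(y_i)$ for $i=1,2$. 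Hence this single pair $(y_1,y_2)$ witnesses~\eqref{eq:x1y1}--\eqref{eq:aiyaix} for every finite family $\{(a_1^i,a_2^i)\}_{i\in I}$ drawn from the $E_n$'s.

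For the reverse direction, first replace each $E_n$ by a countable dense subset of itself; this is still dense in $\{a\in\mathbb{M}_n(\mathbb{C})_{sa}:\mathrm{spec}(a)\subseteq[c,d]\}$ and still contained in $E_n$, so the hypothesis still applies. Enumerate all pairs occurring in $\bigcup_n(E_n\times E_n)$ as $(b_1^{(j)},b_2^{(j)})_{j\ge1}$ with $b_i^{(j)}\in\mathbb{M}_{n_j}(\mathbb{C})$. For each $m$, apply the hypothesis to the finite family $F_m=\{(b_1^{(j)},b_2^{(j)}):j\le m\}$ to obtain $y_1^{(m)},y_2^{(m)}\in(R^\omega)_{sa}$ with $\distr(y_i^{(m)})=\distr(x_i)$ ($i=1,2$) and $\distr(b_1^{(j)}\otimes x_1+b_2^{(j)}\otimes x_2)=\distr(b_1^{(j)}\otimes y_1^{(m)}+b_2^{(j)}\otimes y_2^{(m)})$ for $j\le m$. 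Since $W^*(y_1^{(m)},y_2^{(m)})\subseteq R^\omega$ has separable predual and Connes' embedding property, Proposition~\ref{prop:microstates} together with the norm refinement recalled before it provides $k(m)\in\Nats$ and $Z_1^{(m)},Z_2^{(m)}\in\mathbb{M}_{k(m)}(\mathbb{C})_{sa}$ with $\|Z_i^{(m)}\|\le\|x_i\|$ whose mixed moments approximate those of $(y_1^{(m)},y_2^{(m)})$ to order $m$ within $1/m$.

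Now diagonalize over $m$. Writing a $p$-th moment of a twisted sum as a linear combination of mixed moments with coefficients bounded by $\|a\|^p$, one gets for each fixed $j$ and $p$ that
\[
\lim_{m\to\infty}(\tr\otimes\tr)\big((b_1^{(j)}\otimes Z_1^{(m)}+b_2^{(j)}\otimes Z_2^{(m)})^p\big)=(\tr\otimes\tau)\big((b_1^{(j)}\otimes x_1+b_2^{(j)}\otimes x_2)^p\big),
\]
because the same expression in the $y^{(m)}$'s \emph{equals} the one in the $x$'s while replacing $y^{(m)}$ by $Z^{(m)}$ changes it by $O(2^p\|b^{(j)}\|^p/m)$; likewise $\tr((Z_i^{(m)})^p)\to\tau(x_i^p)$. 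The uniform bound $\|Z_i^{(m)}\|\le\|x_i\|$ makes the functions $(a_1,a_2)\mapsto(\tr\otimes\tr)((a_1\otimes Z_1^{(m)}+a_2\otimes Z_2^{(m)})^p)$ equicontinuous in $m$, so density of $\bigcup_n(E_n\times E_n)$ upgrades this limit to hold for all $a_1,a_2$ with spectra in $[c,d]$. Finally fix unital embeddings $\pi_{k(m)}:\mathbb{M}_{k(m)}(\mathbb{C})\hookrightarrow R$ and a free ultrafilter $\omega$, and let $y_i\in R^\omega$ be the class of $(\pi_{k(m)}(Z_i^{(m)}))_m$. Because the relevant limits over $m$ exist, the ultralimits computing the moments of $y_i$ and of $a_1\otimes y_1+a_2\otimes y_2$ agree with them, so $\distr(a_1\otimes x_1+a_2\otimes x_2)=\distr(a_1\otimes y_1+a_2\otimes y_2)$ for all $a_1,a_2$ with spectra in $[c,d]$; Theorem~\ref{corollaire} then shows $\mathcal{M}$ has Connes' embedding property.

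The essential difficulty is exactly this quantifier reversal: the hypothesis supplies only, for each finite family of matrix coefficients, its own pair in $R^\omega$, and one must produce a single pair valid for all coefficients. Since $R^\omega$ is not obviously stable under forming a further ultraproduct, the argument detours through genuine finite matrices via Proposition~\ref{prop:microstates}, diagonalizes against a countable dense set of coefficients, and re-enters $R^\omega$ as a matrix ultraproduct; the density/equicontinuity step, though routine, is what makes the closing appeal to Theorem~\ref{corollaire} legitimate.
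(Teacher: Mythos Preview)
Your proof is correct, but both directions take routes that differ from the paper's.

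For the forward direction, the paper simply says ``necessity is clear,'' and indeed the direct argument is shorter than yours: since $\mathcal{M}$ embeds in $R^\omega$ trace-preservingly, take $y_i$ to be the image of $x_i$ under that embedding; then all the required distribution equalities hold tautologically, with no need to invoke Theorem~\ref{corollaire} or to recover $\distr(x_i)=\distr(y_i)$ via a polynomial-identity argument.

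For the reverse direction, the paper also enumerates a countable dense set of coefficient pairs and obtains, for each $m$, a pair $y_1^m,y_2^m\in R^\omega$. But rather than passing to matrix microstates, it lifts each $y_j^m$ to a representing sequence $(b_{j,n}^m)_n\in\ell^\infty(\Nats,R)$, finds for each $m$ a set $F_m\in\omega$ on which the first $m$ moment approximations hold to within $1/m$, then picks $k(q)\in\bigcap_{m\le q}F_m$ and forms the single diagonal sequence $(b_{j,k(q)}^q)_q$, whose image in $R^\omega$ is the desired $y_j$. In other words, the paper diagonalizes \emph{inside} $\ell^\infty(\Nats,R)$ using the ultrafilter structure directly, avoiding any appeal to Proposition~\ref{prop:microstates}. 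Your detour through matrices is a legitimate alternative: it trades an ultrafilter manipulation for a second invocation of the microstates characterization, and it makes the worry you voice (that $(R^\omega)^\omega$ need not obviously sit inside $R^\omega$) moot by landing in $R$ itself before re-entering $R^\omega$. The paper's route is shorter and more self-contained; yours is slightly more conceptual in that it names the obstacle and sidesteps it explicitly.
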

\begin{proof}
Necessity is clear.

For sufficiency, we'll use an ultraproduct argument.
Let $(a_1^i,a_2^i)_{i\in\mathbb{N}}$ be an enumeration of a countable, dense subset of
the disjoint union
$\sqcup_{n\geq 1} E_n\times E_n$.
We let $n(i)$ be such that $a_1^i,a_2^i\in\mathbb{M}_{n(i)}(\Cpx)$.
For each $m\in\Nats$, let
$y_1^m,y_2^m$ be elements of $R^{\omega}$ satisfying $\distr(y_j^m)=\distr(x_j)$ and
\begin{equation}
\distr(a_1^i\otimes x_1+a_2^i\otimes x_2)=\distr(a_1^i\otimes y_1^m+a_2^i\otimes y_2^m)
\end{equation}
for all $i\in \{1,\ldots,m\}$.
In particular,
$\|y_j^m\|=\|x_j\|$ for $j=1,2$ and all $m$.
Let
\begin{equation}
b_j^m=(b_{j,n}^m)_{n=1}^\infty\in\ell^\infty(\Nats,R)
\end{equation}
be such that $\|b_j^m\|\le\|x_j\|+1$ and the image of $b_j^m$ in $R^\omega$ is $y_j^m$ ($j=1,2$).
This implies that for all $p\in\Nats$ and all $i\in\{1,\ldots,m\}$, we have
\begin{equation}
\lim_{k\to\omega}\tr_{n(i)}\otimes\tau_R\big((a_1^i\otimes b_{1,k}^m+a_2^i\otimes b_{2,k}^m)^p\big)
=\tr_{n(i)}\otimes\tau\big((a_1^i\otimes x_1+a_2^i\otimes x_2)^p\big),
\end{equation}
which in turn implies that there is a set $F_m$ belonging to the ultrafilter $\omega$ such that 
for all $p,i\in\{1,\ldots,m\}$ and all $k\in F_m$, we have
\begin{equation}
\big|\tr_{n(i)}\otimes\tau_R\big((a_1^i\otimes b_{1,k}^m+a_2^i\otimes b_{2,k}^m)^p\big)
-\tr_{n(i)}\otimes\tau\big((a_1^i\otimes x_1+a_2^i\otimes x_2)^p\big)|<\frac1m.
\end{equation}
For $q\in\Nats$, let $k(q)\in\cap_{m=1}^q F_m$ and for $j=1,2$, let
\begin{equation}
b_j=(b_{j,k(q)}^q)_{q=1}^\infty\in\ell^\infty(\Nats,R).
\end{equation}
Then for all $i,p\in\Nats$, we have
\begin{equation}
\lim_{q\to\infty}\tr_{n(i)}\otimes\tau_R\big((a_1^i\otimes b_{1,k(q)}^q+a_2^i\otimes b_{2,k(q)}^q)^p\big)
=\tr_{n(i)}\otimes\tau\big((a_1^i\otimes x_1+a_2^i\otimes x_2)^p\big),
\end{equation}
Let $y_j$ be the image in $R^\omega$ of $b_j$.
Then we have
\begin{equation}
\distr(a_1^i\otimes x_1+a_2^i\otimes x_2)=\distr(a_1^i\otimes y_1+a_2^i\otimes y_2)
\end{equation}
for all $i\in\Nats$.
By density, we have that~\eqref{eq:aXaY} holds for all $n\in\Nats$ and all
$a_1,a_2\in\mathbb{M}_n(\Cpx)_{sa}$ having spectra in $[c,d]$.
Therefore, by Theorem~\ref{corollaire}, $\mathcal{M}$ is embeddable in $R^\omega$.
\end{proof}

\begin{theorem}\label{switch-indices}
Suppose that a von Neumann algebra $\mathcal{M}$ with trace $\tau$ is generated by self--adjoint elements
$x_1$ and $x_2$ and suppose that both $x_1$ and $x_2$ are positive and invertible.
Then $\mathcal{M}$ has Connes' embedding property if and only if
for all $n\in\Nats$ and all  $a_1,a_2\in\mathbb{M}_n(\mathbb{C})_+$ 
there exists $y_1,y_2\in R^{\omega}_{s.a.}$ such that
\begin{gather}
\distr(x_1)=\distr(y_1) \label{eq:xy1} \\
\distr(x_2)=\distr(y_2) \label{eq:xy2} \\
\distr(a_1\otimes x_1+a_2\otimes x_2)=\distr(a_1\otimes y_1+a_2\otimes y_2) \label{eq:axay}
\end{gather}
hold.
\end{theorem}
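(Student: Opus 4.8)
The plan is to deduce the theorem from Lemma~\ref{lem:YI}, using a block‑diagonal rescaling trick to turn the ``one matrix pair at a time'' hypothesis into the ``finite family at a time'' hypothesis that Lemma~\ref{lem:YI} requires. Necessity is immediate: if $\mathcal M$ embeds trace‑preservingly in $R^\omega$, take $y_1,y_2$ to be the images of $x_1,x_2$; then \eqref{eq:xy1} and \eqref{eq:xy2} hold, and since $\mathrm{id}_{\mathbb{M}_n(\mathbb C)}\otimes(\text{a trace-preserving embedding})$ is again trace preserving, \eqref{eq:axay} holds for all $a_1,a_2$. For sufficiency, I would fix any reals $0<c<d$ and let $E_n$ be the whole set of $a\in\mathbb{M}_n(\mathbb C)_{sa}$ with $\mathrm{spec}(a)\subseteq[c,d]$; note $E_n\subseteq\mathbb{M}_n(\mathbb C)_+$. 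Since $x_1,x_2$ are positive and invertible and $\tau$ is faithful, there are $0<\gamma\le\Gamma$ with $\mathrm{spec}(x_j)\subseteq[\gamma,\Gamma]$ for $j=1,2$; because the support of the distribution of a self-adjoint element with respect to a faithful trace is exactly its spectrum, any $y_j\in (R^\omega)_{sa}$ with $\distr(y_j)=\distr(x_j)$ likewise has $\mathrm{spec}(y_j)\subseteq[\gamma,\Gamma]$.

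The heart of the argument is the following encoding. Given a finite family $(a_1^i,a_2^i)_{i=1}^r$ with $a_j^i\in E_{n(i)}$, choose scalars $0<\rho_1<\cdots<\rho_r$ growing fast enough (e.g.\ $\rho_i=M^i$ with $M>d\Gamma/(c\gamma)$) that the closed intervals $I_i:=[\,\rho_i\cdot 2c\gamma,\ \rho_i\cdot 2d\Gamma\,]$ are pairwise disjoint; this works because $\max I_i<\min I_{i+1}$ reduces to $\rho_{i+1}/\rho_i>d\Gamma/(c\gamma)$. Set $b_j:=\bigoplus_{i=1}^r\rho_i a_j^i\in\mathbb{M}_N(\mathbb C)_+$ with $N=\sum_i n(i)$, and apply the hypothesis of the theorem to the pair $(b_1,b_2)$ to obtain $y_1,y_2\in (R^\omega)_{sa}$ with $\distr(y_j)=\distr(x_j)$ and $\distr(b_1\otimes y_1+b_2\otimes y_2)=\distr(b_1\otimes x_1+b_2\otimes x_2)$.

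Now I would unravel the block structure. We have $b_1\otimes x_1+b_2\otimes x_2=\bigoplus_i\rho_i(a_1^i\otimes x_1+a_2^i\otimes x_2)$, and from $c1\le a_j^i\le d1$ and $\gamma 1\le x_j\le\Gamma 1$ we get $2c\gamma\le a_1^i\otimes x_1+a_2^i\otimes x_2\le 2d\Gamma$, so the $i$-th block has spectrum inside $I_i$. Hence $\distr(b_1\otimes x_1+b_2\otimes x_2)=\sum_i w_i\mu_i^{(x)}$ with $w_i=n(i)/N>0$ and $\mu_i^{(x)}:=\distr\bigl(\rho_i(a_1^i\otimes x_1+a_2^i\otimes x_2)\bigr)$ supported in $I_i$. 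The identical computation on the $y$-side, now using $\mathrm{spec}(y_j)\subseteq[\gamma,\Gamma]$, gives $\distr(b_1\otimes y_1+b_2\otimes y_2)=\sum_i w_i\mu_i^{(y)}$ with $\mu_i^{(y)}$ supported in $I_i$. Since the $I_i$ are disjoint, equality of the two mixtures forces $\mu_i^{(x)}=\mu_i^{(y)}$ for every $i$, and rescaling by $\rho_i^{-1}$ yields $\distr(a_1^i\otimes x_1+a_2^i\otimes x_2)=\distr(a_1^i\otimes y_1+a_2^i\otimes y_2)$ for all $i$. Together with $\distr(y_j)=\distr(x_j)$, this is exactly the condition in Lemma~\ref{lem:YI} for this family, so that lemma gives Connes' embedding property.

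I expect the only real content to be the block-plus-rescaling device; everything else is bookkeeping about distributions of direct sums and about faithfulness of traces. The point that deserves care is that the argument genuinely uses both the hypotheses \eqref{eq:xy1}--\eqref{eq:xy2} (matching the individual distributions of $x_1$ and $x_2$) and the positivity and invertibility of $x_1,x_2$: these force each block of $b_1\otimes y_1+b_2\otimes y_2$ to land in the correct interval $I_i$, which is what lets the combined‑distribution equality be read off block by block. If the spectra of the $x_j$ were not bounded away from $0$, the rescaling could not separate the blocks into disjoint intervals and the whole scheme would break down.
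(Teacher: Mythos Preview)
Your proof is correct and follows essentially the same approach as the paper: both reduce to Lemma~\ref{lem:YI} via the same block-diagonal rescaling trick, forming $b_j=\bigoplus_i \rho_i a_j^i$ with geometrically growing scalars so that the blocks of $b_1\otimes z_1+b_2\otimes z_2$ have spectra in pairwise disjoint intervals, and then reading off the individual block distributions. The only cosmetic differences are that the paper ties the interval $[c,d]$ and the scaling factors to a single constant $K$ with $\|x_j\|,\|x_j^{-1}\|\le K$ (taking $[c,d]=[K,K^2]$ and $\rho_i=K^{4i}$) and phrases the final step in terms of spectral projections rather than disjointly supported mixtures.
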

\begin{proof}
Again, necessity is clear.

For the reverse implication, we will show that the conditions of Lemma~\ref{lem:YI}
are satisfied.
Suppose that for all $n\in\Nats$ and all $a_1,a_2\in\mathbb{M}_n(\Cpx)_+$,
there exist $y_1$ and $y_2$
such that \eqref{eq:xy1}--\eqref{eq:axay}
hold.
Let $K>1$ be such that
\begin{equation}
\|x_j\|\le K,\qquad\|x_j^{-1}\|\le K.
\end{equation}
Let $E_n$ be the set of all elements of $\mathbb{M}_n(\Cpx)_{sa}$ having spectra in the interval $[K,K^2]$.
We will show that the condition appearing in Lemma~\ref{lem:YI} is satisfied for these sets.
Let $I=\{1,2,\ldots,m\}$ and for every $i\in I$ let $n(i)\in\Nats$,
and $a_1^i,a_2^i\in E_{n(i)}$.
We will find $y_1,y_2\in R^\omega$ such that~\eqref{eq:x1y1}--\eqref{eq:aiyaix} hold.
For any $j\in\{1,2\}$ and $i\in I$, the spectrum of $a_j^i\otimes x_j$
lies in the interval
\begin{equation}
[1,K^3].
\end{equation}
Let $N=\sum_{i=1}^mn(i)$ and let $a_1,a_2\in\mathbb{M}_{N}(\Cpx)$ be the block diagonal matrices
\begin{equation}
a_j=\oplus_{i=1}^mK^{4i}a_j^i,\quad(j=1,2).
\end{equation}
By hypothesis, there exists $y_1,y_2\in R^\omega$ such that~\eqref{eq:xy1}--\eqref{eq:axay} hold.
We have
\begin{equation}
a_1\otimes x_1+a_2\otimes x_2=\oplus_{i=1}^mK^{4i}(a_1^i\otimes x_1+a_2^i\otimes x_2)
\end{equation}
and similarly for $a_1\otimes y_1+a_2\otimes y_2$.
Since the spectrum of $a_j^i\otimes x_j$ lies in $[1,K^3]$ for all $j$ and $i$, the spectrum
of $a_1^i\otimes x_1+a_2^i\otimes x_2$ lies in $[2,2K^3]$ as does the spectrum of
$a_1^i\otimes y_1+a_2^i\otimes y_2$.
Since the intervals in the family
$([2K^{4i},2K^{4i+3}])_{i=1}^m$ are pairwise disjoint, it follows that for every $i\in\{1,\ldots,m\}$,
the projections
\begin{gather*}
(0_{n(1)}\oplus\cdots 0_{n(i-1)}\oplus I_{n(i)}\oplus
0_{n(i+1)}\oplus\cdots 0_{n(m)})\otimes 1_{\mathcal{M}} \\
(0_{n(1)}\oplus\cdots 0_{n(i-1)}\oplus I_{n(i)}\oplus
0_{n(i+1)}\oplus\cdots 0_{n(m)})\otimes 1_{R^\omega}
\end{gather*}
arise as the spectral projection of $a_1\otimes x_1+a_2\otimes x_2$
and, respectively, $a_1\otimes y_1+a_2\otimes y_2$, for the inverval $[2K^{4i},2K^{4i+3}]$.
Cutting by these spectral projections, we thus obtain that the distributions of
$a_1^i\otimes x_1+a_2^i\otimes x_2$ and $a_1^i\otimes y_1+a_2^i\otimes y_2$
are the same, as required.
\end{proof}

\section{Quantum Horn boddies}
\label{sec:QHB}

Let $\mathbb{R}^N_{\ge}$ denote the set of $N$--tuples
of real numbers listed in nonincreasing order.
The {\em eigenvalue sequence} of an $N\times N$ self--adjoint matrix is its sequence of eigenvalues
repeated according to multiplicity and in nonincreasing order, so as to lie in $\mathbb{R}^N_\ge$.
Consider $\alpha=(\alpha_1,\ldots,\alpha_N)$ and $\beta=(\beta_1,\ldots,\beta_N)$
in $\mathbb{R}^N_{\ge}$.
Let $S_{\alpha,\beta}$ be the set of all possible
eigenvalue sequences $\gamma=(\gamma_1,\ldots,\gamma_N)$ of $A+B$, where $A$ and $B$
are self--adjoint $N\times N$ matrices with eigenvalue sequences $\alpha$ and $\beta$, respectively.
Thus,  $S_{\alpha,\beta}$ is the set of all eigenvalue sequences of $N\times N$--matrices of the form
\begin{equation}\label{eq:UV}
U\mathrm{diag}(\alpha)U^*+V\mathrm{diag}(\beta)V^*,\qquad(U,V\in\mathbb{U}_N),
\end{equation}
where $\mathbb{U}_N$ is the group of $N\times N$--unitary matrices.
Klyatchko, Totaro, Knutson and Tao described the set $S_{\alpha,\beta}$
in terms first conjectured by Horn.
See Fulton's exposition~\cite{F00}.
Taking traces, clearly every $\gamma\in S_{\alpha,\beta}$ must satisfy
\begin{equation}\label{eq:tracesum}
\sum_{k=1}^N\gamma_k=\sum_{i=1}^N\alpha_i+\sum_{j=1}^N\beta_j.
\end{equation}
Consider the inequality
\begin{equation}\label{eq:IJK}
\sum_{i\in I}\alpha_i+\sum_{j\in J}\beta_j\ge\sum_{k\in K}\gamma_k.
\end{equation}
for a triple $(I,J,K)$ of subsets of $\{1,\ldots,N\}$.
Horn defined sets $T^n_r$ of triples $(I , J, K )$ of subsets of 
$\{1,\ldots,n\}$ of the same 
cardinality $r$, by the following recursive procedure.
Set 
\begin{equation}
U^n_r = \bigg\{(I , J, K )\bigg|\sum_{i\in I}i+\sum_{j\in J}j=\sum_{k\in K}k+\frac{r(r+1)}2\bigg\}.
\end{equation}
When $r = 1$, set  $T^n_1 = U^n_1$.
Otherwise, let
\begin{equation}
\begin{aligned}
T^n_r = \bigg\{(I,J,K)\in U^n_r\bigg|
    \sum_{f\in F}i_f+\sum_{g\in G}j_g\leq\sum_{h\in H}k_h+\frac{p(p+1)}2,& \\
\text{ for all }p < r\text{ and }(F,G,H)\in T^r_p\;\;\;&\bigg\}.
\end{aligned}
\end{equation}
The result of Klyatchko, Totaro, Knutson and Tao is that $S_{\alpha,\beta}$ consists
of those elements $\gamma\in\mathbb{R}^N_{\ge}$
such that the equality~\eqref{eq:tracesum} holds and the inequality~\eqref{eq:IJK} holds for every triple
$(I,J,K)\in\bigcup_{r=1}^{N-1}T^N_r$.
We will refer to $S_{\alpha,\beta}$ as the {\em Horn body} of $\alpha$ and $\beta$.
It is, thus, a closed, convex subset of $\mathbb{R}^N_\ge$.

The analogue of this situation occuring in finite von Neumann algebras has been considered
by Bercovici and Li~\cite{BL01}, \cite{BL06}; let us summarize part of what they have done.
We denote by $\mathcal{F}$ the set of all right--continuous, nonincreasing, bounded functions
$\lambda:[0,1)\to\mathbb{R}$.
Let $\mathcal{M}$ be a von Neumann algebra with normal, faithful, tracial state $\tau$ and let $a=a^*\in\mathcal{M}$.
The {\em distribution} of $a$ is the Borel measure $\mu_a$, supported on the spectrum of $a$,
such that 
\begin{equation}
\tau(a^n)=\int_{\mathbb{R}}t^n\,d\mu_a(t)\qquad(n\ge1).
\end{equation}
The {\em eigenvalue function} of $a$ is 
$\lambda_a\in\mathcal{F}$ defined by
\begin{equation}
\lambda_a(t)=\sup\{x\in\mathbb{R}\mid\mu_a((x,\infty))>t\}.
\end{equation}
We call $\mathcal{F}$ the set of all eigenvalue functions.
It is an affine space, where we take scalar multiples and sums of functions in the usual way.
Identifying $\mathcal{F}$ with the set of all compactly supported Borel measures on the real line,
it is a subspace of the dual of $C(\mathbb{R})$.
We endow $\mathcal{F}$ with the weak$^*$--topology inherited from this pairing.

It is clear that for every $\lambda\in\mathcal{F}$
and every II$_1$--factor $\mathcal{M}$, there is $a=a^*\in\mathcal{M}$
such that $\lambda_a=\lambda$.
Note that if $\mathcal{M}=M_N(\mathbb{C})$ and if $a=a^*\in M_N(\mathbb{C})$ has eigenvalue
sequence $\alpha=(\alpha_1,\ldots,\alpha_N)$, then its eigenvalue function is given by
\begin{equation}\label{eq:lambda}
\lambda_a(t)=\alpha_j,\quad\frac{j-1}N\le t<\frac jN,\qquad (1\le j\le N).
\end{equation}
In this way, $\mathbb{R}^N_\ge$ is embedded as a subset $\mathcal{F}^{(N)}$ of $\mathcal{F}$,
and the affine structure on $\mathcal{F}^{(N)}$ inherited from $\mathcal{F}$ corresponds
to the usual one on $\mathbb{R}^N_\ge$ coming from the vector space structure of $\mathbb{R}^N$.

For a set $(I,J,K)\in T^N_r$, consider the triple $(\sigma_I^N,\sigma_J^N,\sigma_K^N)$,
where for $F\subseteq\{1,2,\ldots,N\}$, we set
\begin{equation}
\sigma_F^N=\bigcup_{i\in F}\,\bigg[\frac{i-1}N,\frac iN\bigg).
\end{equation}
Let 
\begin{equation}
\mathcal{T}=\bigcup_{N=1}^\infty
\bigcup_{r=1}^{N-1}\{(\sigma_I^N,\sigma_J^N,\sigma_K^N)\mid (I,J,K)\in T^N_r\}.
\end{equation}

\begin{theorem} [\cite{BL06}, Thm. 3.2] \label{thm:BL}
For any $u,v,w\in\mathcal{F}$, there exists
self--adjoint elements $a$ and $b$ in the ultrapower $R^\omega$ of the hyperfinite II$_1$--factor with
$u=\lambda_a$, $v=\lambda_b$ and $w=\lambda_{a+b}$ if and only if
\begin{equation}\label{eq:Tref}
\int_0^1u(t)\,dt+\int_0^1v(t)\,dt=\int_0^1w(t)\,dt
\end{equation}
and, for every $(\omega_1,\omega_2,\omega_3)\in\mathcal{T}$, we have
\begin{equation}\label{eq:Hornef}
\int_{\omega_1}u(t)\,dt+\int_{\omega_2}v(t)\,dt\ge\int_{\omega_3}w(t)\,dt.
\end{equation}
\end{theorem}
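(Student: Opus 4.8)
The plan is to realize Theorem~\ref{thm:BL} as the limit, as $N\to\infty$, of the Klyachko--Totaro--Knutson--Tao description of the Horn bodies $S_{\alpha,\beta}$ recalled above, combined with the ultrapower construction; the two implications are handled separately.

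For sufficiency, suppose $u,v,w\in\mathcal{F}$ satisfy \eqref{eq:Tref} and satisfy \eqref{eq:Hornef} for every $(\omega_1,\omega_2,\omega_3)\in\mathcal{T}$. For each $N\in\Nats$ I would let $u_N\in\mathcal{F}^{(N)}$ be the step function obtained by averaging $u$ over the intervals $[\frac{j-1}{N},\frac jN)$, and similarly define $v_N,w_N$; since $u$ is bounded and nonincreasing, $u_N\in\mathcal{F}^{(N)}$, and $u_N\to u$ in $L^1([0,1))$ as $N\to\infty$ (averaging is an $L^1$--contraction, so it suffices to check convergence for continuous $u$, where it is immediate from uniform continuity). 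Writing $((u_N)_1,\dots,(u_N)_N)$ for the eigenvalue sequence in $\mathbb{R}^N_\ge$ corresponding to $u_N$, one has $\sum_{i\in I}(u_N)_i=N\int_{\sigma_I^N}u\,dt$, and similarly for $v_N,w_N$; hence the trace identity \eqref{eq:tracesum} for $(u_N,v_N,w_N)$ is precisely \eqref{eq:Tref}, and for each $(I,J,K)\in T^N_r$ the Horn inequality \eqref{eq:IJK} is precisely \eqref{eq:Hornef} for $(\sigma_I^N,\sigma_J^N,\sigma_K^N)\in\mathcal{T}$. By the theorem of Klyachko, Totaro, Knutson and Tao this forces $w_N\in S_{u_N,v_N}$, so there exist $A_N,B_N\in\mathbb{M}_N(\mathbb{C})_{sa}$ with $\lambda_{A_N}=u_N$, $\lambda_{B_N}=v_N$ and $\lambda_{A_N+B_N}=w_N$, all of norm at most $\max(\|u\|_\infty,\|v\|_\infty,\|w\|_\infty)$. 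I would then fix unital $*$--homomorphisms $\pi_N:\mathbb{M}_N(\mathbb{C})\hookrightarrow R$ and let $a,b\in R^\omega$ be the classes of the bounded sequences $(\pi_N(A_N))_{N}$ and $(\pi_N(B_N))_{N}$ in $\ell^\infty(\Nats,R)$. These are self--adjoint, $a+b$ is the class of $(\pi_N(A_N+B_N))_{N}$, and for every $p\ge0$ one computes the $p$-th moment of $a$ to be $\lim_{N\to\omega}\tr(A_N^p)=\lim_{N\to\infty}\int_0^1 u_N(t)^p\,dt=\int_0^1 u(t)^p\,dt$, using $L^1$--convergence of $u_N\to u$ on their common compact range; hence $\mu_a=\mu_u$, i.e.\ $\lambda_a=u$, and likewise $\lambda_b=v$ and $\lambda_{a+b}=w$, as required.

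For necessity, suppose $a,b\in(R^\omega)_{sa}$ with $u=\lambda_a$, $v=\lambda_b$, $w=\lambda_{a+b}$. The identity \eqref{eq:Tref} follows from $\int_0^1\lambda_x\,dt=\tau(x)$ for self--adjoint $x$ in a finite von Neumann algebra together with $\tau(a+b)=\tau(a)+\tau(b)$. For \eqref{eq:Hornef}, I would first invoke Proposition~\ref{prop:microstates} (applied to the von Neumann algebra generated by $a$ and $b$, which embeds in $R^\omega$): the pair $(a,b)$ has matricial microstates, so there exist $A_k,B_k\in\mathbb{M}_{m_k}(\mathbb{C})_{sa}$, of norm at most $\|a\|$ and $\|b\|$, whose mixed moments converge to those of $(a,b)$. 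Then $\mu_{A_k}\to\mu_a$, $\mu_{B_k}\to\mu_b$ and $\mu_{A_k+B_k}\to\mu_{a+b}$ weakly, with uniformly bounded supports, which forces $\int_{\sigma_F^N}\lambda_{A_k}\to\int_{\sigma_F^N}u$ and similarly for $B_k$ and for $A_k+B_k$. Now fix $(I,J,K)\in T^N_r$. Tensoring with the identity $1_N\in\mathbb{M}_N(\mathbb{C})$ does not change eigenvalue functions, and $\int_{\sigma_I^N}\lambda_{A_k}=\int_{\sigma_{\tilde I}^{m_kN}}\lambda_{A_k\otimes 1_N}$ with $\tilde I=\bigcup_{i\in I}\{(i-1)m_k+1,\dots,im_k\}$, and similarly for $J$ and $K$; so applying the matrix Horn inequality \eqref{eq:IJK} in dimension $m_kN$ to the dilated triple $(\tilde I,\tilde J,\tilde K)$ gives $\int_{\sigma_I^N}\lambda_{A_k}+\int_{\sigma_J^N}\lambda_{B_k}\ge\int_{\sigma_K^N}\lambda_{A_k+B_k}$, and letting $k\to\infty$ yields \eqref{eq:Hornef}.

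The step I expect to be the main obstacle is exactly the one invoked at the end of the necessity argument: that when $(I,J,K)\in T^N_r$, the dilated triple $(\tilde I,\tilde J,\tilde K)$ satisfies the Horn inequality \eqref{eq:IJK} in dimension $m_kN$ --- equivalently, that the index set $\mathcal{T}$ is stable under refining the partition of $[0,1)$ into equal subintervals, so that an inequality arising at level $N$ again arises at level $mN$. This is a purely combinatorial property of Horn's recursive construction of the sets $T^n_r$, and it is what makes $\mathcal{T}$ the natural index set; I would extract it from Horn's analysis or from Fulton's exposition~\cite{F00}. The remaining ingredients --- the identity $\int_0^1\lambda_x\,dt=\tau(x)$, the $L^1$--convergence $u_N\to u$ of the averages, and the passage from weak convergence of distributions to convergence of the functionals $w\mapsto\int_{\sigma_F^N}w$ --- are routine.
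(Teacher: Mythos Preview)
The paper does not give its own proof of this theorem. Theorem~\ref{thm:BL} is quoted, with attribution, as Theorem~3.2 of Bercovici and Li~\cite{BL06}; the present paper uses it as an imported black box and supplies no argument. There is therefore no proof in this paper against which to compare your attempt.

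That said, your sketch is the natural one and is essentially the strategy of~\cite{BL06}. The sufficiency half (discretize $u,v,w$ by averaging over intervals of length $1/N$, apply the finite-dimensional Klyachko--Totaro--Knutson--Tao theorem at each level $N$, then pass to the ultraproduct) is correct as written; the identity $\sum_{i\in I}(u_N)_i=N\int_{\sigma_I^N}u$ is exactly what makes the discretized triples inherit \eqref{eq:Tref} and \eqref{eq:Hornef}, and the same averaging device is used in the present paper in the proof of the proposition immediately following Theorem~\ref{thm:BL}. For the necessity half you have correctly isolated the one genuinely nontrivial ingredient: that the Horn inequality attached to a triple $(I,J,K)\in T^N_r$ remains valid for self-adjoint matrices of any size $mN$, i.e.\ that $\mathcal{T}$ is stable under passing from the partition into $N$ equal pieces to the partition into $mN$ equal pieces. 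This combinatorial stability is precisely the preliminary lemma that Bercovici and Li prove in~\cite{BL06} before establishing their Theorem~3.2, and once it is in hand your microstates-and-limit argument goes through. The auxiliary step you flag as routine---that weak convergence of $\mu_{A_k}$ to $\mu_a$ with uniformly bounded supports yields $\int_{\sigma_F^N}\lambda_{A_k}\to\int_{\sigma_F^N}\lambda_a$---is indeed routine: weak convergence gives convergence of the quantile functions $\lambda_{A_k}\to\lambda_a$ at every continuity point of $\lambda_a$, hence almost everywhere, and dominated convergence finishes it.
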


Given eigenvalue functions $u,v\in\mathcal{F}$, let $F_{u,v}$ be the set of all $w\in\mathcal{F}$
such that~\eqref{eq:Tref} holds and~\eqref{eq:Hornef} holds
for every $(\omega_1,\omega_2,\omega_3)\in\mathcal{T}$.
Since the functions in $F_{u,v}$ are uniformly bounded,
we see that $F_{u,v}$ is a compact, convex subset of $\mathcal{F}$.

Now we consider an alternative formulation of a special case of Theorem~\ref{thm:BL}.
Let $N\in\mathbb{N}$ and $\alpha,\beta\in\mathbb{R}^N_\ge$.
For $d\in\mathbb{N}$, let
\begin{equation}
K_{\alpha,\beta,d}=\{\lambda_C\mid C=\mathrm{diag}(\alpha)\otimes 1_d+U(\mathrm{diag}(\beta)\otimes1_d)U^*,\,U\in\mathbb{U}_{Nd}\}.
\end{equation}
For $d=1$, this is just the set of eigenvalue functions corresponding to the Horn body $S_{\alpha,\beta}$.
Let
\begin{equation}
K_{\alpha,\beta,\infty}=\overline{\bigcup_{d\ge1}K_{\alpha,\beta,d}}\,.
\end{equation}
As a consequence of Bercovici and Li's results we have the following.
\begin{proposition}
Let $\alpha,\beta\in\mathbb{R}^N_\ge$ and let $u=\lambda_{\mathrm{diag}(\alpha)}$ and 
$v=\lambda_{\mathrm{diag}(\beta)}$ be the correspoding eigenvalue functions.
Then 
\begin{equation}\label{eq:KF}
K_{\alpha,\beta,\infty}=F_{u,v}
\end{equation}
is a compact, convex subset of $\mathcal{F}$.

If Connes' embedding problem has a positive solution, then for every II$_1$--factor $\mathcal{M}$
and every $a,b\in\mathcal{M}_{s.a.}$ whose eigenvalue functions are $u$ and $v$, respectively,
we have $\lambda_{a+b}\in K_{\alpha,\beta,\infty}$.
\end{proposition}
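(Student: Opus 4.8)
The plan is to establish the equality \eqref{eq:KF} first; once $K_{\alpha,\beta,\infty}=F_{u,v}$ is known, compactness and convexity of $K_{\alpha,\beta,\infty}$ are inherited from the already established corresponding properties of $F_{u,v}$, so nothing further need be said about them. For the inclusion $K_{\alpha,\beta,\infty}\subseteq F_{u,v}$, I note that for every $d\ge1$ the matrix $\mathrm{diag}(\alpha)\otimes 1_d\in\mathbb{M}_{Nd}(\mathbb{C})$ has, with respect to the normalized trace, the same eigenvalue function as $\mathrm{diag}(\alpha)$, namely $u$ (each $\alpha_i$ now occurs with multiplicity $d$ out of $Nd$), and likewise $U(\mathrm{diag}(\beta)\otimes 1_d)U^*$ has eigenvalue function $v$. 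Fixing a trace--preserving embedding $\mathbb{M}_{Nd}(\mathbb{C})\hookrightarrow R^\omega$ and applying the ``only if'' half of Theorem~\ref{thm:BL} shows that $\lambda_C\in F_{u,v}$ for every $C$ occurring in the definition of $K_{\alpha,\beta,d}$; hence $\bigcup_{d\ge1}K_{\alpha,\beta,d}\subseteq F_{u,v}$, and since $F_{u,v}$ is closed, $K_{\alpha,\beta,\infty}\subseteq F_{u,v}$.

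For the reverse inclusion, fix $w\in F_{u,v}$. The ``if'' half of Theorem~\ref{thm:BL} furnishes self--adjoint $a,b\in R^\omega$ with $\lambda_a=u$, $\lambda_b=v$, $\lambda_{a+b}=w$. Since the von Neumann algebra generated by $a$ and $b$ embeds in $R^\omega$, Proposition~\ref{prop:microstates} shows the pair $(a,b)$ has matricial microstates, which by Voiculescu's norm bound may be taken of norm at most $\|a\|$, $\|b\|$, and which exist in all sufficiently large matrix dimensions, in particular in dimensions $k$ divisible by $N$. Thus for each $m$ I obtain $k=k(m)$, with $N\mid k(m)$ and $k(m)\to\infty$, together with $A,B\in\mathbb{M}_k(\mathbb{C})_{s.a.}$ of the stated norms whose mixed moments up to order $m$ lie within $1/m$ of those of $(a,b)$.

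The crux is to correct $A$ and $B$ to matrices with exactly the eigenvalue sequences of $\mathrm{diag}(\alpha)\otimes 1_{k/N}$ and $\mathrm{diag}(\beta)\otimes 1_{k/N}$, at a cost small in the normalized Hilbert--Schmidt norm $\|x\|_2:=\tr_k(x^*x)^{1/2}$. Diagonalizing $A$ with eigenvalues in nonincreasing order by a unitary $W_A$ and setting $\tilde A:=W_A(\mathrm{diag}(\alpha)\otimes 1_{k/N})W_A^*$ (and $\tilde B$ analogously), one has $\|\tilde A-A\|_2=\|u-\lambda_A\|_{L^2([0,1))}$, because the step function on $[0,1)$ read off from the eigenvalue sequence of $\mathrm{diag}(\alpha)\otimes 1_{k/N}$ is precisely $u$. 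Convergence of the moments of $A$ to those of $a$ gives weak convergence $\mu_A\to\mu_a$, hence pointwise convergence of eigenvalue functions off a countable set, hence, by the uniform norm bound, $\lambda_A\to u$ in $L^2([0,1))$; so $\|\tilde A-A\|_2\to0$, and likewise $\|\tilde B-B\|_2\to0$. Then $\tilde C:=\tilde A+\tilde B$ satisfies $\|\tilde C-(A+B)\|_2\to0$, while $\mu_{A+B}\to\mu_{a+b}$ weakly because the mixed moments of $(A,B)$ converge to those of $(a,b)$; testing against Lipschitz functions and using $|\tr_k(f(X)-f(Y))|\le\|f\|_{\mathrm{Lip}}\,\|X-Y\|_2$ for self--adjoint $X,Y$, one concludes $\mu_{\tilde C}\to\mu_{a+b}$ weakly, that is, $\lambda_{\tilde C}\to\lambda_{a+b}=w$ in $\mathcal{F}$. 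Finally $\tilde A$ is unitarily equivalent to $\mathrm{diag}(\alpha)\otimes 1_{k/N}$ and $\tilde B$ to $\mathrm{diag}(\beta)\otimes 1_{k/N}$, so conjugating $\tilde C$ by a unitary diagonalizing $\tilde A$ exhibits it in the form $\mathrm{diag}(\alpha)\otimes 1_{k/N}+V(\mathrm{diag}(\beta)\otimes 1_{k/N})V^*$; hence $\lambda_{\tilde C}\in K_{\alpha,\beta,k/N}$ and, letting $m\to\infty$, $w\in K_{\alpha,\beta,\infty}$. This proves \eqref{eq:KF}. I expect this perturbation estimate to be the main obstacle: one must pass from the weak convergence that moment approximation supplies to the $L^2$--control needed to force the eigenvalue sequences to be exact, and then push this back through the sum to recover weak$^*$ convergence in $\mathcal{F}$; the rest is bookkeeping with Theorem~\ref{thm:BL} and the definitions.

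For the final assertion, assume Connes' embedding problem has a positive answer, let $\mathcal{M}$ be a II$_1$--factor, and let $a,b\in\mathcal{M}_{s.a.}$ have eigenvalue functions $u$ and $v$. The von Neumann algebra generated by $a$ and $b$ has separable predual, so it embeds in $R^\omega$ in a trace--preserving way; the images of $a$, $b$ and $a+b$ then have distributions $\mu_a$, $\mu_b$, $\mu_{a+b}$ and hence eigenvalue functions $u$, $v$, $\lambda_{a+b}$. Applying the ``only if'' half of Theorem~\ref{thm:BL} to these images gives $\lambda_{a+b}\in F_{u,v}=K_{\alpha,\beta,\infty}$, as desired.
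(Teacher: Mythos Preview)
Your proof is correct, but for the inclusion $F_{u,v}\subseteq K_{\alpha,\beta,\infty}$ you take a genuinely different route from the paper.

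The paper argues directly with the Horn inequalities: given $w\in F_{u,v}$, it discretizes $w$ at scale $1/(Nd)$ by averaging over the intervals $[(i-1)/(Nd),i/(Nd))$ to produce $w^{(Nd)}\in\mathcal{F}^{(Nd)}$, observes that this averaging preserves both the trace identity~\eqref{eq:Tref} and the inequalities~\eqref{eq:Hornef} for triples coming from $T^{Nd}_r$ (since each $\sigma_K^{Nd}$ is a union of these very intervals), and then invokes the finite--dimensional Horn theorem to conclude $w^{(Nd)}\in K_{\alpha,\beta,d}$.  Letting $d\to\infty$ gives $w\in K_{\alpha,\beta,\infty}$.

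You instead pass through $R^\omega$: use the ``if'' direction of Theorem~\ref{thm:BL} to realize $w$ as $\lambda_{a+b}$ with $a,b\in R^\omega$, pull microstates $A,B$ in dimensions divisible by $N$, correct their eigenvalue sequences exactly to $\alpha\otimes 1_{k/N}$, $\beta\otimes 1_{k/N}$ via Hoffman--Wielandt--type $\|\cdot\|_2$ estimates, and then check that the eigenvalue function of the corrected sum still converges to $w$.  This is longer and relies on the analytic control $\|\lambda_A-u\|_{L^2}\to 0$ together with the Lipschitz bound $|\tr_k(f(X))-\tr_k(f(Y))|\le\|f\|_{\mathrm{Lip}}\|X-Y\|_2$, but it has the virtue of never touching the Horn inequalities explicitly: all the combinatorics is hidden inside the single invocation of Theorem~\ref{thm:BL}.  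The paper's argument is shorter and more transparent because $u$ and $v$ are already step functions at scale $1/N$, so the discretization costs nothing on that side; your argument would adapt more readily if $u$ and $v$ were arbitrary elements of $\mathcal{F}$.

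For the inclusion $K_{\alpha,\beta,\infty}\subseteq F_{u,v}$ and for the final assertion about Connes' embedding, your treatment agrees with the paper's (you simply spell out what the paper calls ``clear'').
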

\begin{proof}
The inclusion $\subseteq$ in~\eqref{eq:KF} is clear.
For the reverse inclusion, let $w\in F_{u,v}$.
Then~\eqref{eq:Tref} holds and~\eqref{eq:Hornef} holds for every $(\omega_1,\omega_2,\omega_3)\in\mathcal{T}$.
For $n\in\Nats$, let $w^{(n)}\in\mathcal{F}$ be obtained by averaging over the intervals of length $1/n$, namely,
\begin{equation}
w^{(n)}(t)=\int_{(i-1)/n}^{i/n}f(s)\,ds,\qquad(\frac{i-1}n\le t<\frac in,\quad i\in\{1,2,\ldots,n\}).
\end{equation}
Then $w^{(n)}$ corresponds to an eigenvalue sequence $\gamma\in\mathbb{R}^n_\ge$.
We have
\begin{equation}%\label{eq:Trefn}
\int_0^1u(t)\,dt+\int_0^1v(t)\,dt=\int_0^1w^{(n)}(t)\,dt
\end{equation}
and, for every $(\omega_1,\omega_2,\omega_3)=(\sigma_I^n,\sigma_J^n,\sigma_K^n)\in\mathcal{T}$
for $(I,J,K)\in T^n_r$, we have
\begin{equation}%\label{eq:Hornefn}
\int_{\omega_1}u(t)\,dt+\int_{\omega_2}v(t)\,dt\ge\int_{\omega_3}w^{(n)}(t)\,dt.
\end{equation}
Therefore, taking $n=Nd$ to be a multiple of $N$, by the theorem formerly known as Horn's conjecture,
we have $\gamma\in S_{\alpha\otimes1_d,\beta\otimes1_d}$ and, consequently, $w^{(Nd)}\in K_{\alpha,\beta,d}$.
Since $w^{(Nd)}$ converges as $d\to\infty$ to $w$, we have $w\in K_{\alpha,\beta,\infty}$.
This proves the equality~\eqref{eq:KF}.

The final statment is a consequence of Bercovici and Li's result, Theorem~\ref{thm:BL}.
\end{proof}

% Let us also remark that
% Bercovici and Li in~\cite{BL01} considered a subset $\mathcal{T}_{FT}\subseteq\mathcal{T}$
% (analogous to the Freede--Thompson inequalities in the finite dimensional case)
% and proved that for all II$_1$ factors $\mathcal{M}$,
% for all self--adjoint $a,b\in\mathcal{M}$ and for all $(\omega_1,\omega_2,\omega_3)\in\mathcal{T}_{FT}$,
% the inequality~\eqref{eq:Hornef} does hold, with $u=\lambda_a$, $v=\lambda_b$ and $w=\lambda_{a+b}$.

Bercovici and Li's results provide a means of trying to find a II$_1$--factor $\mathcal{M}$
that lack's Connes' embedding property:
namely, by
finding self--adjoint elements $a,b\in\mathcal{M}$ such that $\lambda_{a+b}\notin F_{\lambda_a,\lambda_b}$;
this amounts to finding
some $(I,J,K)\in T^N_r$
such that 
\begin{equation}
\int_{\sigma^N_I}\lambda_a(t)\,dt+\int_{\sigma^N_J}\lambda_b(t)\,dt<\int_{\sigma^N_K}\lambda_{a+b}(t)\,dt.
\end{equation}

\medskip

On the other hand we will use Theorem~\ref{switch-indices} to see that Connes' embedding problem is
equivalent to an anlogous question about versions of the Horn body with ``matrix coefficients.''

Let $a_1,a_2\in\mathbb{M}_n(\mathbb{C})_{sa}$, and $\alpha,\beta\in\mathbb{R}^N_\ge$.
We introduce the set $K^{a_1,a_2}_{\alpha,\beta}$ of the eigenvalue functions of all
matrices of the form
\begin{equation}\label{eq:aUDU}
a_1\otimes U\mathrm{diag}(\alpha)U^*+a_2\otimes V\mathrm{diag}(\beta)V^*,\qquad(U,V\in\mathbb{U}_N).
\end{equation}
Although, for reasons that will be immediately apparent,
we choose to view $K^{a_1,a_2}_{\alpha,\beta}$ as a subset of $\mathcal{F}$,
we may equally well consider the corresponding eigenvalue sequences
and view $K^{a_1,a_2}_{\alpha,\beta}$ as a subset of $\mathbb{R}^{nN}_\ge$.
Comparing to~\eqref{eq:UV}, the set $K^{a_1,a_2}_{\alpha,\beta}$ is seen to be
the analogue of the Horn body $S_{\alpha,\beta}$, but with ``coefficients'' $a_1$ and $a_2$.
We will refer to these sets as {\em quantum Horn bodies}.

The example below shows that
$K^{a_1,a_2}_{\alpha,\beta}$
need not be convex, even in the case
where $a_1,a_2$ commute.
\begin{example}\label{ex:K}
Let
\begin{equation}
a_1=\left(\begin{matrix}1&0\\0&4\end{matrix}\right),\qquad
a_2=\left(\begin{matrix}2&0\\0&1\end{matrix}\right)
\end{equation}
and let $\alpha=\beta=(2,1)$.
Then the $4\times 4$ matrices of the form~\eqref{eq:aUDU} are all unitary conjugates of the matrices
\begin{equation}
R_t=\left(\begin{matrix}1&0\\0&4\end{matrix}\right)\otimes\left(\begin{matrix}1&0\\0&2\end{matrix}\right)
+\left(\begin{matrix}2&0\\0&1\end{matrix}\right)\otimes
\left(\begin{matrix}1+t&\sqrt{t(1-t)}\\\sqrt{t(1-t)}&2-t\end{matrix}\right),
\end{equation}
for $0\le t\le 1$.
One easily finds the eigenvalues
$\lambda_1(t)\ge\lambda_2(t)\ge\lambda_3(t)\ge\lambda_4(t)$
of $R_t$ to be
\begin{align}
\lambda_1(t)&=\frac{15}2+\frac12\sqrt{25-16t} \\[1ex]
\lambda_2(t)&=\begin{cases}\frac92+\frac12\sqrt{9-8t},&0\le t\le t_1 \\
\frac{15}2-\frac12\sqrt{25-16t},&t_1\le t\le1
\end{cases} \\[1ex]
\lambda_3(t)&=\begin{cases}\frac{15}2-\frac12\sqrt{25-16t},&0\le t\le t_1 \\
\frac92+\frac12\sqrt{9-8t},&t_1\le t\le1
\end{cases} \\[1ex]
\lambda_4(t)&=\frac92-\frac12\sqrt{9-8t},
\end{align}
where $t_1=\frac32\sqrt{65}-\frac{23}2\approx0.593$.
Then the set $\{(\lambda_1(t),\ldots,\lambda_4(t))\mid0\le t\le 1\}$ is a $1$--dimensional subset of $4$--space
that is far from being convex.
For example, a plot of the projection of this set onto the last two coordinates is the curve
in Figure~\ref{fig:lambdaplot}.
\begin{figure}[bht]
\caption{A parametric plot of $\lambda_4$ (vertical axis) and $\lambda_3$ (horizontal axis).}
\label{fig:lambdaplot}
\begin{center}
\epsfig{file=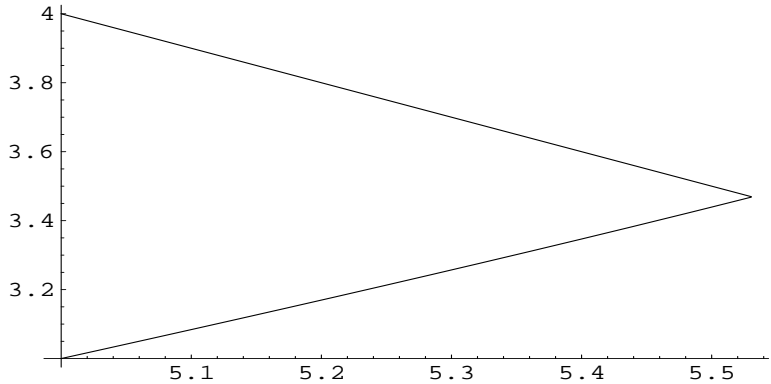,height=2.0in,width=4in}
\end{center}
\end{figure}
The upper part of this curve is a line segment, while the lower part is not.
\end{example}

\smallskip

Extending the notions introduced above,
for integers $d\ge1$, 
let $K^{a_1,a_2}_{\alpha,\beta,d}$ be the set of
the eigenvalue functions of all matrices of the form
\begin{equation}\label{eq:AUDdU}
a_1\otimes U(\mathrm{diag}(\alpha)\otimes 1_d)U^*+a_2\otimes V(\mathrm{diag}(\beta)\otimes 1_d)V^*,
\qquad(U,V\in\mathbb{U}_{Nd}).
\end{equation}
If $d'$ divides $d$, then we have
\begin{equation}
K^{a_1,a_2}_{\alpha,\beta,d'}\subseteq K^{a_1,a_2}_{\alpha,\beta,d}\;.
\end{equation}
Let us define
\begin{equation}
K^{a_1,a_2}_{\alpha,\beta,\infty}=\overline{\bigcup_{d\in\mathbb{N}}K^{a_1,a_2}_{\alpha,\beta,d}}\;,
\end{equation}
where the closure is in the weak$^*$--topology for $\mathcal{F}$ described earlier in this section.
Note that the set $K^{a_1,a_2}_{\alpha,\beta,\infty}$ is compact.

\begin{question}
Though Example~\ref{ex:K} shows that $K^{a_1,a_2}_{\alpha,\beta}$ need not be convex,
is it true that $K^{a_1,a_2}_{\alpha,\beta,\infty}$ must be convex,
or even that $K^{a_1,a_2}_{\alpha,\beta,d}$ must be convex for all $d$ sufficiently large?
Note that it is clear that $K^{a_1,a_2}_{\alpha,\beta,\infty}$ is convex with respect to the affine
structure on $\mathcal{F}$ that arises from taking convex combinations of measures,
under the correspondence between $\mathcal{F}$ and the
set of Borel probability measures on $\mathbb{R}$.
However, we are interested in the other affine structure of $\mathcal{F}$,
resulting from addition of functions on $[0,1)$.
\end{question}

For $a_1,a_2\in\mathbb{M}_n(\mathbb{C})_{s.a.}$ with eigenvalue sequences $\gamma_1,\gamma_2\in\mathbb{R}^n_\ge$,
we obviously have
\begin{equation}
K^{a_1,a_2}_{\alpha,\beta}\subseteq K_{\gamma_1\otimes \alpha,\gamma_2\otimes \beta}
\end{equation}
and
\begin{equation}
K^{a_1,a_2}_{\alpha,\beta,\infty}\subseteq K_{\gamma_1\otimes \alpha,\gamma_2\otimes \beta,\infty}\,.
\end{equation}
The following example shows that these inclusions can be strict.

\begin{example}
Let
\begin{equation}
\begin{aligned}
a_1=\left(\begin{matrix}1&0\\0&0\end{matrix}\right),\qquad
a_2=\left(\begin{matrix}0&0\\0&1\end{matrix}\right).
\end{aligned}
\end{equation}
One directly sees that for any eigenvalue sequences $\alpha$ and $\beta$ of length $N$ and any
$U,V\in\mathbb{U}_N$, the eigenvalue sequence of
\begin{equation}
a_1\otimes U(\mathrm{diag}(\alpha)\otimes 1_d)U^*+a_2\otimes V(\mathrm{diag}(\beta)\otimes 1_d)V^*
\end{equation}
is the re--ordering of the concatenation of $\alpha$ and $\beta$.
Thus, $K_{\alpha,\beta}^{a_1,a_2}$ has only one element.
Moreover, dilating $\alpha$ to $\alpha\otimes 1_d$ does not change the corresponding eigenvalue
functions of 
\begin{equation}
a_1\otimes U\mathrm{diag}(\alpha\otimes1_d)U^*+a_2\otimes V\mathrm{diag}(\alpha\otimes1_d)V^*.
\end{equation}
This shows that $K^{a_1,a_2}_{\alpha,\alpha,\infty}$ has only one element.
Now we easily get
\begin{equation}
K^{a_1,a_2}_{\alpha,\beta,\infty}\ne K_{\alpha\oplus0_N,\beta\oplus0_N},
\end{equation}
where $\alpha\oplus0_N$ means the eigenvalue sequence of $a_1\otimes\mathrm{diag}(\alpha)$, etc.
\end{example}

For $\mathcal{M}$ a II$_1$--factor, we define
$L^{a_1,a_2}_{\alpha,\beta,\mathcal{M}}$ to be the set of all eigenvalue functions of all operators of the form
\begin{equation}
a_1\otimes x_1+a_2\otimes x_2\in\mathbb{M}_n(\mathbb{C})\otimes\mathcal{M},
\end{equation}
where $x_1$ and $x_2$ are self--adjoint elements of $\mathcal{M}$ whose eigenvalue functions
agree with those of the matrices $\mathrm{diag}(\alpha)$ and $\mathrm{diag}(\beta)$, respectively
(see~\eqref{eq:lambda} for an explicit description of the latter).
It is easliy seen that we have
\begin{equation}\label{eq:KLRomega}
K^{a_1,a_2}_{\alpha,\beta,\infty}=L^{a_1,a_2}_{\alpha,\beta,R^\omega}\;.
\end{equation}
Let
\begin{equation}
L^{a_1,a_2}_{\alpha,\beta}=\bigcup_{\mathcal{M}}L^{a_1,a_2}_{\alpha,\beta,\mathcal{M}}\;,
\end{equation}
where the union is over all II$_1$--factors $\mathcal{M}$ with separable predual (acting on a specific separable Hilbert space,
say).
Using an ultraproduct argument, one can show that $L^{a_1,a_2}_{\alpha,\beta}$ is closed in $\mathcal{F}$
and compact.
Also, one obviously has
\begin{equation}
K^{a_1,a_2}_{\alpha,\beta,\infty}\subseteq L^{a_1,a_2}_{\alpha,\beta}.
\end{equation}

Theorem \ref{switch-indices}
gives us the following equivalent formulation of the embedding question.
\begin{theorem}\label{thm:CEP}
The following are equivalent:
\renewcommand{\labelenumi}{(\roman{enumi})}
\begin{enumerate}
\item Every II$_1$--factor $\mathcal{M}$ with separable predual has Connes' embedding property.
\item For all integers $n,N\ge1$ and all
$a_1,a_2\in\mathbb{M}_n(\mathbb{C})_{sa}$, and $\alpha,\beta\in\mathbb{R}^N_\ge$, we have
\begin{equation}\label{eq:KL}
K^{a_1,a_2}_{\alpha,\beta,\infty}= L^{a_1,a_2}_{\alpha,\beta}.
\end{equation}
\end{enumerate}
\end{theorem}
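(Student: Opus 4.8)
The plan is to deduce the theorem from Theorem~\ref{switch-indices} after reducing to the case of positive, invertible generators. First I would prove the implication (i)$\Rightarrow$(ii). Assume every II$_1$--factor with separable predual has Connes' embedding property. Fix $n,N\ge1$, $a_1,a_2\in\mathbb{M}_n(\mathbb{C})_{sa}$, $\alpha,\beta\in\mathbb{R}^N_\ge$. The inclusion $K^{a_1,a_2}_{\alpha,\beta,\infty}\subseteq L^{a_1,a_2}_{\alpha,\beta}$ is already noted in the text, so only the reverse inclusion needs proof. Let $\lambda\in L^{a_1,a_2}_{\alpha,\beta}$, so $\lambda=\lambda_{a_1\otimes x_1+a_2\otimes x_2}$ for some II$_1$--factor $\mathcal{M}$ with separable predual and self--adjoint $x_1,x_2\in\mathcal{M}$ whose eigenvalue functions are those of $\mathrm{diag}(\alpha)$ and $\mathrm{diag}(\beta)$. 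By hypothesis $\mathcal{M}$ embeds trace--preservingly in $R^\omega$; pushing $x_1,x_2$ forward, we obtain self--adjoint elements $y_1,y_2\in R^\omega$ with the same eigenvalue functions, and with $a_1\otimes y_1+a_2\otimes y_2$ having the same distribution, hence the same eigenvalue function $\lambda$, as $a_1\otimes x_1+a_2\otimes x_2$. By~\eqref{eq:KLRomega}, $\lambda\in L^{a_1,a_2}_{\alpha,\beta,R^\omega}=K^{a_1,a_2}_{\alpha,\beta,\infty}$, giving equality.

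The substantive direction is (ii)$\Rightarrow$(i). Suppose~\eqref{eq:KL} holds for all $n,N,a_1,a_2,\alpha,\beta$ as stated. Let $\mathcal{M}$ be a II$_1$--factor with separable predual; we must show it has Connes' embedding property. It suffices to check this for $\mathcal{M}$ generated by two self--adjoint elements (a standard fact, since any separable II$_1$--factor is singly, hence doubly, generated), and moreover we may arrange those two generators $x_1,x_2$ to be positive and invertible: replace $x_i$ by $x_i+c$ for $c$ large enough, which does not change the generated algebra. Now I would invoke Theorem~\ref{switch-indices}: it remains to show that for every $n$ and every $a_1,a_2\in\mathbb{M}_n(\mathbb{C})_+$, there exist $y_1,y_2\in R^\omega_{s.a.}$ satisfying~\eqref{eq:xy1}--\eqref{eq:axay}. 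Let $\alpha,\beta\in\mathbb{R}^N_\ge$ be eigenvalue sequences of matrices $\mathrm{diag}(\alpha),\mathrm{diag}(\beta)$ whose eigenvalue functions approximate those of $x_1,x_2$ — but here there is a subtlety: the eigenvalue functions of $x_1,x_2$ need not be step functions, so they are not literally of the form $\lambda_{\mathrm{diag}(\alpha)}$ for finite $\alpha$.

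To handle this I would first prove a version of Theorem~\ref{switch-indices} with the finite sequences $\alpha,\beta$ replaced by arbitrary eigenvalue functions $u=\lambda_{x_1}$, $v=\lambda_{x_2}$; that is, define the obvious analogues $K^{a_1,a_2}_{u,v,\infty}$ and $L^{a_1,a_2}_{u,v}$ using arbitrary self--adjoint operators with prescribed eigenvalue functions, and show~\eqref{eq:KL} in this generality follows from the finite-sequence case by a weak$^*$ approximation argument — approximate $u,v$ by step functions $u_k,v_k$ arising from finite sequences, pass to limits using that $\sqcup_d K^{a_1,a_2}_{\cdot,\cdot,d}$ behaves continuously under such approximation and that the relevant sets are compact. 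Then, taking $\lambda=\lambda_{a_1\otimes x_1+a_2\otimes x_2}\in L^{a_1,a_2}_{u,v}=K^{a_1,a_2}_{u,v,\infty}$, I extract $y_1,y_2\in R^\omega$ with eigenvalue functions $u,v$ (hence~\eqref{eq:xy1},~\eqref{eq:xy2}) and with $a_1\otimes y_1+a_2\otimes y_2$ having eigenvalue function $\lambda$; since two self--adjoint operators with the same eigenvalue function have the same distribution, this gives~\eqref{eq:axay}. Theorem~\ref{switch-indices} then yields Connes' embedding property for $\mathcal{M}$. I expect the main obstacle to be exactly this reduction from finite eigenvalue sequences to arbitrary eigenvalue functions: one must verify that the weak$^*$ limits of the quantum Horn bodies behave as expected and that the equality of the two families of sets is stable under the approximation, and also be slightly careful that the matrices $a_1,a_2$ appearing when one unpacks the block-diagonal trick inside Theorem~\ref{switch-indices} are positive, which is why we arranged $a_1,a_2\in\mathbb{M}_n(\mathbb{C})_+$ and positive invertible generators at the outset.
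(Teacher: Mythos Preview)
Your argument for (i)$\Rightarrow$(ii) is correct and is essentially the paper's.

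For (ii)$\Rightarrow$(i), your overall route---reduce to two self-adjoint generators, shift them to be positive and invertible, then invoke Theorem~\ref{switch-indices}---is exactly the paper's. The one place you diverge is in handling the fact that the eigenvalue functions of $x_1,x_2$ need not be step functions. You propose to extend~\eqref{eq:KL} from finite eigenvalue sequences $\alpha,\beta$ to arbitrary eigenvalue functions $u,v$ by defining analogues $K^{a_1,a_2}_{u,v,\infty}$, $L^{a_1,a_2}_{u,v}$ and running a weak$^*$-approximation and ultralimit argument. This can be made to work, but it is extra machinery and, as you note, the continuity of the quantum Horn bodies in $(u,v)$ has to be checked.

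The paper sidesteps this entirely. Since the only goal is to show that the pair $(x_1,x_2)$ has matricial microstates, and having microstates is stable under norm perturbation of the generators, one may \emph{first} replace $x_1,x_2$ by nearby elements (obtained by functional calculus) whose eigenvalue functions already lie in $\mathcal{F}^{(N)}$ for some $N$. These approximants still sit in the II$_1$--factor $\mathcal{M}$, so for any $a_1,a_2$ the eigenvalue function of $a_1\otimes x_1+a_2\otimes x_2$ lies in $L^{a_1,a_2}_{\alpha,\beta}$, and hypothesis~\eqref{eq:KL} applies directly, via~\eqref{eq:KLRomega}, to produce the required $y_1,y_2\in R^\omega$. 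This yields a shorter proof with no new definitions or continuity lemmas; your approach would also succeed but is the long way around.
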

\begin{proof}
Clearly, (i) implies $L^{a_1,a_2}_{\alpha,\beta}=L^{a_1,a_2}_{\alpha,\beta,R^\omega}$, and then from~\eqref{eq:KLRomega}
we get~\eqref{eq:KL}.

Suppose (ii) holds.
It is well known that to solve Connes' embedding problem in the affirmative,
it will suffice to show that every tracial von Neuman algebra $\mathcal{M}$ that is generated by two
self--adjoints $x_1$ and $x_2$ is embeddable in $R^\omega$.

So suppose $\mathcal{M}$ is generated by self--adjoints $x_1$ and $x_2$.
By Proposition~\ref{prop:microstates}, it will suffice to show that $x_1$ and $x_2$ have matricial microstates.
Approximating $x_1$ and $x_2$, if necessary, we may without loss of generality assume that the eigenvalue
functions of both belong to $\mathcal{F}^{(N)}$ for some $N\in\mathbb{N}$, namely, that they correspond
to sequences $\alpha$ and, respectively, $\beta$ in $\mathbb{R}^N_\ge$.
By adding constants, if necessary, we may without loss of generality assume that $x_1$ and $x_2$ are positive
and invertible.
Let $n\in\mathbb{N}$ and let $a_1,a_2\in\mathbb{M}_n(\mathbb{C})$.
Using~\eqref{eq:KLRomega} and~\eqref{eq:KL}, there are $y_1,y_2\in R^\omega$ 
such that~\eqref{eq:xy1}--\eqref{eq:axay} of Theorem~\ref{switch-indices} hold.
So by that theorem, the pair $x_1,x_2$ has matricial microstates.
\end{proof}

\noindent
{\bf Note:} We recently learned of a result of Mika\"el De La Salle~\cite{DLS}
that seems related to our Lemma~\ref{lem:stepII}.

\end{document}